\definecolor{blue}{rgb}{0,0,0.9}
\definecolor{red}{rgb}{0.9,0,0}
\definecolor{green}{rgb}{0,0.9,0}
\theoremstyle{plain}
\newtheorem{theo}{Theorem}[section]
\newtheorem{lem}[theo]{Lemma}
\newtheorem{coro}[theo]{Corollary}
\newtheorem{prop}[theo]{Proposition}
\newtheorem{exmp}[theo]{Example}
\theoremstyle{definition}
\theoremstyle{remark}
\newtheorem{rem}[theo]{Remark}
\def\<{\langle}
\def\>{\rangle}
\def\dist{{\rm dist}}
\def\E{\mathcal{E}}
\def\A{\mathcal{A}}
\def\L{\mathcal{L}}
\def\F{\mathcal{F}}
\def\R{\mathbb{R}}
\def\({\left(}
\def\){\right)}
\def\prox{\textbf{Prox}_g}
\def\ll{\|}
\def\prox1{\textbf{Prox}}
\let\svthefootnote\thefootnote
\newcommand\blankfootnote[1]{%
	\let\thefootnote\relax\footnotetext{#1}%
	\let\thefootnote\svthefootnote%
}
\def\inprod#1#2{\langle #1,\,#2\rangle}
\def\norm#1{\|#1\|}
\def\gam{\gamma}
\def\lam{\lambda}
\begin{document}
	\title{Self-adaptive ADMM for semi-strongly convex problems}
		
	\author{Tianyun Tang
\thanks{Department of Mathematics, National
         University of Singapore, Singapore
         119076 ({\tt ttang@u.nus.edu}).
         }, \quad 
	 Kim-Chuan Toh\thanks{Department of Mathematics, and Institute of 
Operations Research and Analytics, National
         University of Singapore, 
       Singapore
         119076 ({\tt mattohkc@nus.edu.sg}).  The research of this author is supported
by the Ministry of Education, Singapore, under its Academic Research Fund Tier 3 grant call (MOE-2019-T3-1-010).}
	 }
	
	\date{\today}
	
	\maketitle

\begin{abstract}
In this paper, we develop a self-adaptive ADMM that updates the penalty parameter adaptively. When one part of the objective function is strongly convex i.e., the problem is semi-strongly convex, our algorithm can update the penalty parameter adaptively with guaranteed convergence. { We establish various types of convergence results including accelerated convergence rate of $O(1/k^2),$ linear convergence and convergence of iteration points. This enhances various previous results because we allow the penalty parameter to change adaptively. We also develop a partial proximal point method with the subproblem solved by our adaptive ADMM. This enables us to solve problems without semi-strongly convex property.} Numerical experiments are conducted to demonstrate the high 
efficiency and robustness of our method.
\end{abstract}

\bigskip
\noindent{\bf keywords:} Adaptive ADMM, Semi-strongly convex, Partial proximal point method
\\[5pt]
{\bf Mathematics subject classification: 90C06, 90C25, 90C90}

\section{Introduction}
\subsection{Adaptive ADMM}
In this paper, we consider the following linearly constrained convex optimization problem 
\begin{equation}
\min \left\{ f(y)+g(z)\,\mid\ By+Cz=b \right\},
\label{eq-prob}
\end{equation}
where $B\in \R^{m\times n_1}$, $C\in \R^{m\times n_2}$, $ f: \R^{n_1}\rightarrow (-\infty,\infty]$ and $g: \R^{n_2}\rightarrow (-\infty,\infty]$ are proper lower semi-continuous and convex functions. 
One of the most popular methods to solve the problem (\ref{eq-prob}) is alternating direction method of multiplier i.e., ADMM
\cite{Gabay-1976,Glowinski-1975}.
The convergence analysis of the traditional ADMM often assumes that the penalty parameter is fixed; see for example \cite{O1, O2, CST-2017, He-convergence, WY}. Because the efficiency of ADMM is highly sensitive to the penalty parameter, in practice, one would prefer to adaptively update the penalty parameter to avoid laborious tuning; see for example \cite{Liada,adab}. Existing works on the convergence of adaptive ADMM mostly assume that the ratio between two consecutive parameters tends to $1$ rapidly, and the algorithm quickly behaves just like the ADMM with a fixed penalty parameter \cite{He-adapt,xu2017adaptive,xu2017adaptive1,xu2017adaptive2}. In this paper, we aim to partially close the gap between theory and practice. In detail, we assume that one of the objective function $g(\cdot)$ is strongly convex, that is, for any $x\in \R^{n_2}$, $y\in \R^{n_2}$,
\begin{equation}\label{gstr}
 g(x)-g(y)\geq \< \xi,x-y\>+\sigma_g \ll x-y\ll^2/2,\ {\rm for\ any\ \xi\in\ }\partial g(y),
\end{equation}
where $\sigma_g>0$ is the strong convexity parameter and $\|\cdot\|$ stands for the Euclidean norm. We call this problem semi-strongly convex, which is also used in \cite{QTD}. With this assumption, we may greatly increase the freedom of adaptively adjusting the penalty parameter with guaranteed convergence. In Section~\ref{Sec-alg}, we will propose an adaptive ADMM with a special penalty updating scheme. That is, at every iteration, we define an interval to choose the new penalty parameter. The interval's length may tend to infinity, with floating lower bound and upper bound. We allow the parameter to increase to infinity at the rate of $O(k)$, where $k$ is the iteration counter, and decrease at a linear rate as long as there is a lower bound. We obtain various convergence results within this framework, which will be described in the next subsection.

\subsection{Convergence analysis}

In Section~\ref{Sec-conv}, we will analyse the convergence property of our algorithm. We first prove that our algorithm achieves accelerated convergence rate of $O(1/k^2)$ in terms of objective function value and primal feasibility. Accelerating algorithms for constrained optimization problems has been an active research area; see \cite{AC,AC1,HLI,UY,QTD,QTD1} for examples.  Since Goldstein et al. \cite{Goldstein-2014} proposed an accelerated ADMM with 
the convergence rate of $O(1/k^2)$ by making rather strong assumptions including one
that assumes both $f$ and $g$ are strongly convex, 
various attempts have been made to weaken the assumptions while maintaining the convergence rate of $O(1/k^2)$. 
In  \cite{XYY}, 
Xu  proposed an accelerated ADMM by increasing the penalty parameter while assuming that one of the component objective functions $g(\cdot)$ is strongly convex. This work significantly weakens the assumptions of Goldstein et al.
Since then, Xu's framework of increasing the penalty parameter has been generalised and modified by other researchers, see \cite{QTD2, Teboulle, XYY1}. Among them, Tran-Dinh increases the penalty parameter at a quadratic rate $O(k^2)$ to achieve the non-ergodic convergence rate of $O(1/k^2).$ Although the technique of increasing the penalty parameter can result in a nice convergence rate, this framework has two issues that prevent it from being practical. First, the convergence analysis focuses on the objective function value and primal feasibility, which doesn't involve the dual variables. In practice, we cannot check the optimality of a solution without the dual variable because the optimal objective value is unknown in advance. Therefore, it is necessary for us to analyse the convergence of the dual variable. Another issue is that, if we keep increasing the penalty parameter to very large values, the dual feasibility will not be penalised enough, and that will deteriorate the convergence speed
of the dual feasibility. This observation will also be illustrated in the numerical experiments. In order to overcome these two issues, we update the penalty parameter adaptively to balance the primal and dual KKT residue. Moreover, we prove  the convergence of primal-dual iterates, which has not been shown before for accelerated ADMM or adaptive ADMM. Our result implies that we can use the KKT residue  in the stopping criterion because the dual variable also converges. Apart from the sub-linear convergence rate, we also consider the condition for our algorithm to achieve linear convergence. For this aspect, our algorithm can achieve linear convergence if $g$ is  strongly convex and Lipschitz continuously differentiable, and the matrix $C$ has full row-rank. Note that the linear convergence of ADMM has been studied before, see \cite{DLinear, WY, HLinear, PG-linear, RN-linear} to just name a few. However, our analysis allows the penalty parameter to change adaptively. As far as we know, this paper is the first to analyse the linear convergence of an
adaptive ADMM.

\subsection{Partial proximal point method}
Because our adaptive method is designed to solve a semi-strongly convex problem, in Section~\ref{Sec-PPPM}, we consider how to apply it to solve problem (\ref{eq-prob}) if neither $f$ nor $g$ is strongly convex. The idea is that we add a proximal term to one of the variable and solve a sequence of problems. This method is called partial proximal point method (PPPM), which has been used in \cite{jiang2013solving} by Jiang et al. Note that our formulation is different from that in \cite{jiang2013solving} in the sense that their subproblem is strongly convex while our subproblem is only semi-strongly convex. 
While the convergence of the PPPM was done in \cite{PPPM}, the stopping conditions for solving the subproblems are based on practically unverifiable conditions. Here
we prove the convergence of the PPPM with the subproblems solved inexactly under verifiable conditions. Because the subproblem becomes a semi-strongly convex problem, we may use the adaptive ADMM to solve it. In Section~\ref{Sec-numer}, we  conduct numerical experiments to verify the efficiency of the PPPM against different types of ADMM.
\subsection{Organization of the paper}

In Section~\ref{Sec-alg}, we present our main algorithm. In Section~\ref{Sec-conv}, we 
conduct the convergence analysis of the algorithm. In Section~\ref{Sec-PPPM}, we discuss the partial proximal point method and its convergence analysis. In Section~\ref{Sec-imp}, we discuss some implementation strategies of our algorithm. In Section~\ref{Sec-numer}, we present numerical results to verify the robustness, convergence rate and efficiency of IADMM. In Section~\ref{Sec-conc}, we give a brief conclusion. The proofs of some results are put in the appendix.

\section{Self-adaptive ADMM}\label{Sec-alg}

\subsection{Preliminaries}
Before we state our algorithm, we provide several useful definitions and notations. Define $x=(y,z)$, we say that $(y,z,\lambda)$ is KKT
  solution of (\ref{eq-prob}) if the following conditions hold.
\begin{equation}\label{KKTmap} 
\begin{cases}0\in \partial f(y)+B^\top \lambda\\0\in \partial g(z)+C^\top \lambda\\By+Cz-b=0.\end{cases}
\end{equation}
Define
$$
\F(x):=f(y)+g(z), \;\; \A x:=By+Cz, \;\;\L(x,\lambda):=\F(x)+\inprod{\lambda}{\A x-b}.
$$ 
We assume that the KKT solution set for \eqref{eq-prob} is nonempty and let
 $(x^*:=(y^*,z^*), \lambda^*)$ be a KKT solution of (\ref{eq-prob}). Then we have $0\in \partial_x \L(x^*,\lambda^*).$ Hence $\L(x,\lambda^*)-\L(x^*,\lambda^*)\geq 0$, and $\L(x,\lambda^*)-\L(x^*,\lambda^*)=0$ if and only if $0\in \partial_x \L(x,\lambda^*)$. Thus, we have the following result
 
 \begin{equation}\label{Feb_16_1}
 \L(x,\lambda^*)-\L(x^*,\lambda^*)=0,\ \ll \A x-b\ll^2=0 \iff (x,\lambda^*)\ {\rm is\ a\ KKT\ solution.}
 \end{equation}
 For a given $n\times n$ symmetric positive semidefinite matrix $D$ and vectors $x,y,z\in \R^n$, define 
$$\eta_D(x,y,z):=\< D(z-y),x-z\>, \quad 
\xi_D(x,y,z):=\frac{1}{2}\ll x-y\ll^2_D-\frac{1}{2}\ll x-z\ll^2_D,
$$ 
where $\|w\|_D := \sqrt{\< w,Dw\>}$ for any $w\in \R^n$. Simple calculation shows that $\eta_{D}(x,y,z)=\xi_D(x,y,z)-\ll y-z\ll^2_D/2.$

\subsection{Algorithm statement}
Now we present our algorithm as follows.

\begin{algorithm}
	\renewcommand{\algorithmicrequire}{\textbf{Input:}}
	\renewcommand{\algorithmicensure}{\textbf{Output:}}
	\caption{IADMM}
	\label{alg:1}
	\begin{algorithmic}
		
		\STATE {\bf Initialization}: Choose $ (x^1,\lambda^1)$ and set constant parameters $\gamma\in \big(1,\frac{1+\sqrt{5}}{2}\big)$, $\epsilon, \beta,\tau\in (0,1)$, initial penalty parameter $\beta_1\in [\beta,+\infty).$ Choose a matrix $Q\succeq 0$. For all $k\geq 1$, 
		choose matrix $P_k\succeq 0\ {\rm such\ that}\ \beta_{k+1}P_{k+1}\preceq \beta_k P_k.$ Let $Q_k=\beta_k Q$.
		\FOR{$k=1,2,\dots$}		
		\STATE 1, $y^{k+1}\in \arg\min_y\left\{ f(y)+\< \lambda^k,By\>+\frac{\beta_k}{2}\ll By+Cz^k-b\ll^2+\frac{1}{2}\ll y-y^k\ll^2_{P_k}\right\}$
		\STATE 2, $z^{k+1}\in \arg\min_z\left\{ g(z)+\< \lambda^k,Cz\>+\frac{\beta_k}{2}\ll By^{k+1}+Cz-b\ll^2+\frac{1}{2}\ll z-z^k\ll^2_{Q_k}\right\}$
		\STATE 3, $\lambda^{k+1}=\lambda^{k}+\gamma\beta_k({By^{k+1}+Cz^{k+1}-b})$
		\STATE 4, Choose $\beta_{k+1}\in \left[\max\{\beta,\tau\beta_k\},\ \sqrt{\beta_k^2+\frac{(1-\epsilon)\sigma_g\beta_k}{\lambda_{\max}(C^\top C+Q)}}\right]$
		\ENDFOR
	\end{algorithmic}  
\end{algorithm}

Algorithm~\ref{alg:1} is similar to the traditional (proximal) ADMM. The only difference is that in step 4, we choose a new penalty parameter in an interval containing the current penalty parameter. This is why we call the algorithm IADMM, where "I" stands for interval.  The parameters $(\epsilon,\beta)>0$ are introduced only for theoretical analysis. In practice, we may choose $(\epsilon,\beta)$ to be small numbers. For simplicity, we only consider the case where
the step-length $\gamma> 1$ since in practice this choice typically will lead to a
faster convergence compared to the case where $\gamma\in (0,1)$. But note that our IADMM still works for the latter case. Some remarks on Algorithm IADMM are in order. 
First, the algorithm is still applicable to the case where the function $g(\cdot)$ is 
not strongly convex, i.e., $\sigma_g = 0$. In this case, the penalty parameters $\{\beta_k\}$
must be non-increasing.
Second, when the parameters $\beta_k$ is fixed for all
$k$, IADMM reduces to the proximal ADMM in \cite{O2} when we set $P_k = \beta_k P$ 
for some given $P\succeq 0$.
Third, we can also add a smooth function to the $g$-part, and perform a 
majorization in every iteration like the algorithm in \cite{XYY}. The convergence analysis is similar but includes more tedious details. For simplicity, we only consider problem (\ref{eq-prob}). Last, we assume that every subproblem is well-defined with an optimal solution.

\section{Convergence rate analysis}\label{Sec-conv}

In this section, we will analyse the convergence property of IADMM. We first state some useful lemmas. Their proofs are put in the appendix.

\subsection{Useful lemmas}
The following lemma serves as the foundation in the convergence analysis of our IADMM. Many theorems later are based on this lemma. Note that it holds even if $\sigma_g= 0$.

\begin{lem}\label{onestep}
Let $\delta := 1+\gam-\gam^2 >0$.
Then for any $(x,\lambda)$ satisfying $\A x-b=0$, we have
\begin{align}\label{onestepineq}
&\beta_k\left( \L(x^{k+1},\lambda)-\L(x,\lambda)\right) +
\frac{\delta\beta_{k-1}^2}{2\gamma}\ll \A x^{k}-b\ll^2
+ \frac{\delta\beta_k^2}{2}\norm{C(z^{k+1}-z^k)}^2
\notag
\\
& +\frac{\beta_k}{2}\norm{y^k-y^{k+1}}_{P_k}^2 
 +\frac{\beta_k^2}{2} \norm{z^k-z^{k+1}}_Q^2 
\notag \\
& \leq \Phi_k(x,\lambda) - \Phi_{k+1}(x,\lambda) 
-\sigma_g\beta_k\norm{ z^k-z^{k+1}}^2-\frac{\epsilon \sigma_g\beta_k}{2}\norm{ z-z^{k+1}}^2, 
\end{align}
where
\begin{eqnarray} \label{eq-Phi}
\Phi_k(x,\lambda) &=& \frac{1}{2\gamma} \norm{\lambda-\lambda^k}^2
+ \frac{(2-\gamma)\beta_{k-1}^2}{2}\norm{\A x^k-b}^2
+ \frac{\beta_k^2}{2}\norm{ z-z^k}^2_{C^\top C+Q}
\notag\\
&&
+\frac{\beta_{k-1}^2}{2}\norm{z^k-z^{k-1}}^2_Q
+ \frac{\beta_k}{2}\norm{y-y^k}_{P_k}^2.
\end{eqnarray}
\end{lem}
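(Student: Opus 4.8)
The plan is to convert the optimality conditions of the two subproblems into a one-step descent inequality for the augmented Lagrangian, to turn every inner product into telescoping squared norms through the identities $\eta_D,\xi_D$, and to absorb the effect of the varying penalty parameter using the two monotonicity conditions built into the algorithm: the matrix inequality $\beta_{k+1}P_{k+1}\preceq\beta_kP_k$ and the upper endpoint of the interval in Step~4. The constant $\delta=1+\gamma-\gamma^2$ will appear at the very end as the exact coefficient that survives a pair of tight Young inequalities, and it is positive precisely because $\gamma<\frac{1+\sqrt{5}}{2}$.

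First I would record the optimality conditions of Steps~1 and~2: there are subgradients $\xi_f^{k+1}\in\partial f(y^{k+1})$ and $\xi_g^{k+1}\in\partial g(z^{k+1})$ with $\xi_f^{k+1}=-B^\top\big(\lambda^k+\beta_k(By^{k+1}+Cz^k-b)\big)-P_k(y^{k+1}-y^k)$ and $\xi_g^{k+1}=-C^\top\big(\lambda^k+\beta_k(\A x^{k+1}-b)\big)-\beta_kQ(z^{k+1}-z^k)$. Feeding $\xi_f^{k+1}$ into the convexity inequality for $f$ and $\xi_g^{k+1}$ into the strong convexity inequality \eqref{gstr} for $g$ bounds $\F(x^{k+1})-\F(x)$ from above and produces the term $-\tfrac{\sigma_g}{2}\norm{z-z^{k+1}}^2$. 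Adding $\inprod{\lambda}{\A x^{k+1}-b}$ and using $\A x-b=0$ turns the left side into $\L(x^{k+1},\lambda)-\L(x,\lambda)$, while the linear terms in $\lambda^k$ collapse to $\inprod{\lambda-\lambda^k}{\A x^{k+1}-b}$. Multiplying the inequality by $\beta_k$ and using the dual update $\A x^{k+1}-b=\frac{1}{\gamma\beta_k}(\lambda^{k+1}-\lambda^k)$ rewrites this inner product as $\frac{1}{2\gamma}\big(\norm{\lambda-\lambda^k}^2-\norm{\lambda-\lambda^{k+1}}^2\big)+\frac{\gamma\beta_k^2}{2}\norm{\A x^{k+1}-b}^2$. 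Applying $\eta_{P_k}$ and $\eta_{C^\top C+Q}$ together with $\eta_D=\xi_D-\tfrac12\norm{\cdot}_D^2$ to the proximal and penalty cross terms then generates the squared-norm differences that assemble into $\Phi_k-\Phi_{k+1}$, together with the increments $\norm{y^k-y^{k+1}}_{P_k}^2$, $\norm{z^k-z^{k+1}}_Q^2$ and $\norm{C(z^{k+1}-z^k)}^2$ and one surviving cross term $\beta_k^2\inprod{C(z^{k+1}-z^k)}{\A x^{k+1}-b}$.

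Next I would remove the two coefficient mismatches created by the changing parameter. The $P_k$-increment telescopes because $\beta_{k+1}P_{k+1}\preceq\beta_kP_k$ gives $\tfrac{\beta_{k+1}}{2}\norm{y-y^{k+1}}_{P_{k+1}}^2\le\tfrac{\beta_k}{2}\norm{y-y^{k+1}}_{P_k}^2$, and the $y$-increment $\norm{y^k-y^{k+1}}_{P_k}^2$ matches the left side of the claim directly. For the $(C^\top C+Q)$-term I would write $-\tfrac{\beta_k^2}{2}\norm{z-z^{k+1}}_{C^\top C+Q}^2=-\tfrac{\beta_{k+1}^2}{2}\norm{z-z^{k+1}}_{C^\top C+Q}^2+\tfrac{\beta_{k+1}^2-\beta_k^2}{2}\norm{z-z^{k+1}}_{C^\top C+Q}^2$ and bound the last summand by $\tfrac{(1-\epsilon)\sigma_g\beta_k}{2}\norm{z-z^{k+1}}^2$, using $\norm{\cdot}_{C^\top C+Q}^2\le\lambda_{\max}(C^\top C+Q)\norm{\cdot}^2$ and the upper endpoint of Step~4; together with the $-\tfrac{\sigma_g\beta_k}{2}\norm{z-z^{k+1}}^2$ coming from \eqref{gstr} this leaves exactly $-\tfrac{\epsilon\sigma_g\beta_k}{2}\norm{z-z^{k+1}}^2$, the last term of the claim. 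At this point the diagonal residual contribution is $\frac{\gamma\beta_k^2}{2}\norm{\A x^{k+1}-b}^2-\beta_k^2\norm{\A x^{k+1}-b}^2=-\tfrac{(2-\gamma)\beta_k^2}{2}\norm{\A x^{k+1}-b}^2$, which matches the residual term of $-\Phi_{k+1}$ with no slack.

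The hard part is the surviving cross term $\beta_k^2\inprod{C(z^{k+1}-z^k)}{\A x^{k+1}-b}$, which cannot be absorbed by a Young step against $\norm{\A x^{k+1}-b}^2$ because that budget is already exhausted. The device I expect to use is the strong convexity of $g$ between the two \emph{consecutive} iterates: subtracting the $z$-optimality conditions at iterations $k$ and $k-1$ and invoking \eqref{gstr} at $z^{k+1}$ and $z^k$ gives $\inprod{\xi_g^{k+1}-\xi_g^k}{z^{k+1}-z^k}\ge\sigma_g\norm{z^{k+1}-z^k}^2$. Since $\big(\lambda^k+\beta_k(\A x^{k+1}-b)\big)-\big(\lambda^{k-1}+\beta_{k-1}(\A x^k-b)\big)=\beta_k(\A x^{k+1}-b)+(\gamma-1)\beta_{k-1}(\A x^k-b)$ by the dual updates, this inequality, after multiplication by $\beta_k$, yields an upper bound for the cross term that simultaneously produces $-\sigma_g\beta_k\norm{z^{k+1}-z^k}^2$ and $-\beta_k^2\norm{z^{k+1}-z^k}_Q^2$ and brings in two new cross terms, one in $\A x^k-b$ and one in $Q(z^k-z^{k-1})$. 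This is exactly why $\Phi_k$ carries the terms $\tfrac{(2-\gamma)\beta_{k-1}^2}{2}\norm{\A x^k-b}^2$ and $\tfrac{\beta_{k-1}^2}{2}\norm{z^k-z^{k-1}}_Q^2$. I would finish by bounding these two cross terms with Young's inequality, the parameters being forced to $\gamma\beta_k/\beta_{k-1}$ and $\beta_k/\beta_{k-1}$; a direct computation then shows that, after cancellation against the $\Phi_k$ budgets, the coefficients surviving on $\norm{\A x^k-b}^2$ and $\norm{C(z^{k+1}-z^k)}^2$ are exactly $-\tfrac{\delta\beta_{k-1}^2}{2\gamma}$ and $-\tfrac{\delta\beta_k^2}{2}$, the negatives of the corresponding left-hand terms of the claim, while the $Q$-increments balance. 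The golden-ratio bound $\gamma<\frac{1+\sqrt{5}}{2}$ is used here only to ensure $\delta>0$, so that these surviving coefficients are genuine positive terms that belong on the left; the condition $\gamma>1$ keeps the $\Phi_k$ budgets nonnegative, and the leftover distance-to-$z$ terms recombine into $\tfrac{\beta_k^2}{2}\norm{z-z^k}_{C^\top C+Q}^2$ to match $\Phi_k$.
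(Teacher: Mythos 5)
Your proposal is correct and follows essentially the same route as the paper's proof: the same optimality-condition/convexity bounds, the same consecutive-iterate strong-convexity trick to handle the cross term $\beta_k\inprod{C(z^k-z^{k+1})}{b-\A x^{k+1}}$, the same Young steps yielding $\delta=1+\gam-\gam^2$ via $\gam(\gam-1)=1-\delta$, and the same use of Step 4 and $\beta_{k+1}P_{k+1}\preceq\beta_kP_k$ to telescope the $(C^\top C+Q)$- and $P$-terms. No substantive differences to report.
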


In the above lemma, $\Phi_k(x,\lambda)$ serves as a kind of energy function
for us to measure the progress of each IADMM iteration. In particular, the left-hand-side
of \eqref{onestepineq} gives the reduction in the ``energy'' one can expect at each iteration.

The next lemma also appears in Xu's convergence analysis of accelerated ADMM in \cite{XYY}.
\begin{lem}\label{useful}
Consider a continuous function $D(\lambda).$ Suppose for any $\lambda\in \R^m,$ $\L(x^k,\lambda)-\L(x^*,\lambda)\leq h(k)D(\lambda),$ where $h(k)\geq 0.$ Then $\ll \A x^k-b\ll=O(h(k))$, $| \F(x^k)-\F(x^*)|=O(h(k))$
\end{lem}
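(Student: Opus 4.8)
The plan is to exploit the assumed inequality for cleverly chosen values of the free multiplier $\lambda$, together with the saddle-point property of $(x^*,\lambda^*)$. First I would set $r^k := \A x^k - b$ and expand the left-hand side: since $x^*$ is feasible, $\A x^* - b = 0$, so $\L(x^*,\lambda) = \F(x^*)$ for every $\lambda$, and hence the hypothesis reads
\begin{equation}
\F(x^k) - \F(x^*) + \inprod{\lambda}{r^k} \le h(k)\, D(\lambda) \qquad \text{for all } \lambda \in \R^m.
\label{eq-prop-main}
\end{equation}
I would also record the lower bound coming from the saddle-point inequality $\L(x,\lambda^*) \ge \L(x^*,\lambda^*)$ noted just before \eqref{Feb_16_1}: taking $x = x^k$ gives $\F(x^k) - \F(x^*) \ge -\inprod{\lambda^*}{r^k}$.

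For the feasibility estimate $\norm{r^k} = O(h(k))$, I would substitute into \eqref{eq-prop-main} the point $\lambda = \lambda^* + r^k/\norm{r^k}$ (the case $r^k = 0$ being trivial since $h(k)\ge 0$). Then $\inprod{\lambda}{r^k} = \inprod{\lambda^*}{r^k} + \norm{r^k}$, and combining with the lower bound $\F(x^k)-\F(x^*)\ge -\inprod{\lambda^*}{r^k}$ makes the two $\inprod{\lambda^*}{r^k}$ contributions cancel, leaving $\norm{r^k} \le h(k)\, D(\lambda)$. The chosen $\lambda$ lies on the unit sphere centered at $\lambda^*$, which is compact; since $D$ is continuous, $M := \max_{\norm{\mu - \lambda^*} = 1} D(\mu) < \infty$, so $\norm{r^k} \le M\, h(k) = O(h(k))$.

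Finally, for the objective estimate I would take $\lambda = 0$ in \eqref{eq-prop-main}, giving the upper bound $\F(x^k) - \F(x^*) \le h(k)\, D(0)$, while the lower bound $\F(x^k) - \F(x^*) \ge -\inprod{\lambda^*}{r^k} \ge -\norm{\lambda^*}\,\norm{r^k}$ together with the already-established $\norm{r^k} = O(h(k))$ yields $\F(x^k) - \F(x^*) \ge -O(h(k))$. The two bounds combine to give $|\F(x^k) - \F(x^*)| = O(h(k))$, as required. I expect the only delicate point to be the feasibility argument: one must arrange the $\lambda$-substitution so that the sign-indefinite term $\inprod{\lambda^*}{r^k}$ is absorbed by the saddle-point lower bound, and then invoke continuity of $D$ on a compact sphere (rather than a pointwise estimate) to extract a single constant independent of the iteration counter $k$.
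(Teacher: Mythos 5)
Your proposal is correct and follows essentially the same route as the paper: both use the saddle-point inequality $\L(x^k,\lambda^*)\geq \L(x^*,\lambda^*)$ to lower-bound $\F(x^k)-\F(x^*)$ by $-\inprod{\lambda^*}{\A x^k-b}$, then test the hypothesis at a suitably chosen $\lambda$ in a bounded set (you use the unit sphere centered at $\lambda^*$; the paper takes the supremum over the ball $\norm{\lambda}\leq\norm{\lambda^*}+1$) and invoke continuity of $D$ on that compact set to extract a uniform constant. The only cosmetic difference is that your choice cancels $\inprod{\lambda^*}{\A x^k-b}$ exactly, whereas the paper absorbs it via Cauchy--Schwarz; both yield the same conclusion.
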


With Lemma~\ref{onestep} and Lemma~\ref{useful}, we are able to state and prove the convergence results about accelerated convergence, primal-dual iterative convergence and linear convergence in the following three subsections respectively.

\subsection{Ergodic convergence rate of $O(1/k^2)$}
\begin{theo}\label{ergodic}
Define $v^k:=\frac{\sum_{i=1}^{k}\beta_i x^{i+1}}{\sum_{i=1}^k\beta_i}$ and $\gamma_k:=\sum_{i=1}^k \beta_i$ Then

$$|\F(v^k)-\F(x^*)|=O(1/\gamma_k),\ \ll \A v^k-b\ll=O(1/\gamma_k).$$
\end{theo}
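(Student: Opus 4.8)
The plan is to use the foundational one-step inequality from Lemma~\ref{onestep} and telescope it. Since we want an ergodic (averaged) rate, I would first apply \eqref{onestepineq} with the fixed choice $(x,\lambda)=(x^*,\lambda^*)$, where $x^*$ satisfies $\A x^*-b=0$ so the hypothesis of the lemma is met. Crucially, since we only aim for the $O(1/\gamma_k)$ rate and not linear convergence, I would simply discard the nonnegative terms on the left-hand side of \eqref{onestepineq} (the feasibility term, the $\|C(z^{k+1}-z^k)\|^2$ term, and the two proximal-type terms weighted by $P_k$ and $Q$), as well as the two nonpositive ``strong convexity'' terms $-\sigma_g\beta_k\|z^k-z^{k+1}\|^2$ and $-\tfrac{\epsilon\sigma_g\beta_k}{2}\|z^*-z^{k+1}\|^2$ on the right. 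This reduces the inequality to the clean telescoping form
\begin{equation*}
\beta_k\bigl(\L(x^{k+1},\lambda^*)-\L(x^*,\lambda^*)\bigr)\;\le\;\Phi_k(x^*,\lambda^*)-\Phi_{k+1}(x^*,\lambda^*).
\end{equation*}

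Next I would sum this from $i=1$ to $k$. The right-hand side telescopes to $\Phi_1(x^*,\lambda^*)-\Phi_{k+1}(x^*,\lambda^*)\le \Phi_1(x^*,\lambda^*)=:M$, a constant independent of $k$ (here I use $\Phi_{k+1}\ge 0$, which holds because every matrix appearing in \eqref{eq-Phi} is positive semidefinite). This gives $\sum_{i=1}^{k}\beta_i\bigl(\L(x^{i+1},\lambda^*)-\L(x^*,\lambda^*)\bigr)\le M$. Now I would invoke the convexity of $\L(\cdot,\lambda^*)$ together with Jensen's inequality: since $v^k$ is the convex combination $\sum_{i=1}^k\beta_i x^{i+1}/\gamma_k$ with weights summing to $1$, we have $\L(v^k,\lambda^*)-\L(x^*,\lambda^*)\le \frac{1}{\gamma_k}\sum_{i=1}^k\beta_i\bigl(\L(x^{i+1},\lambda^*)-\L(x^*,\lambda^*)\bigr)\le M/\gamma_k$.

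To finish, I would convert this bound on the Lagrangian into bounds on the objective gap and feasibility via Lemma~\ref{useful}. The subtlety is that Lemma~\ref{useful} requires a bound of the form $\L(v^k,\lambda)-\L(x^*,\lambda)\le h(k)D(\lambda)$ for \emph{all} $\lambda$, whereas the telescoping above only fixes $\lambda=\lambda^*$. I would therefore redo the summation argument keeping $\lambda$ as a free variable: the only $\lambda$-dependence in $\Phi_k$ is through the term $\tfrac{1}{2\gamma}\|\lambda-\lambda^k\|^2$, so summing and using $\A v^k-b=\sum_i\beta_i(\A x^{i+1}-b)/\gamma_k$ yields $\gamma_k\bigl(\L(v^k,\lambda)-\L(x^*,\lambda)\bigr)\le \Phi_1(x^*,\lambda)$, and by linearity in $\lambda$ one gets an expression of the form $h(k)D(\lambda)$ with $h(k)=1/\gamma_k$ and $D$ a fixed continuous function of $\lambda$. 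Applying Lemma~\ref{useful} then delivers both $|\F(v^k)-\F(x^*)|=O(1/\gamma_k)$ and $\|\A v^k-b\|=O(1/\gamma_k)$.

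I expect the main obstacle to be the bookkeeping in the last paragraph: ensuring that the dependence on the free multiplier $\lambda$ is isolated cleanly so that Lemma~\ref{useful} applies. In particular one must verify that discarding the strong-convexity terms is harmless (they only help the bound), that $\Phi_{k+1}(x^*,\lambda)\ge 0$ for the telescoping to terminate correctly, and that $v^k$ being a convex combination is exactly what is needed to pass from the summed per-iterate inequality to a single inequality at the averaged point $v^k$. The convexity/Jensen step is routine but must be stated carefully because $\L(\cdot,\lambda)$ is convex for each fixed $\lambda$.
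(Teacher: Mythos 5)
Your proposal is correct and follows essentially the same route as the paper: apply Lemma~\ref{onestep} with $x=x^*$ but $\lambda$ left free, discard the sign-definite terms, telescope to get $\sum_{i=1}^k\beta_i\bigl(\L(x^{i+1},\lambda)-\L(x^*,\lambda)\bigr)\le \Phi_1(x^*,\lambda)=:D(\lambda)$, use convexity of $\L(\cdot,\lambda)$ to pass to $v^k$, and invoke Lemma~\ref{useful}. Your initial detour through the fixed choice $\lambda=\lambda^*$ is unnecessary (the paper keeps $\lambda$ free from the start), but you correctly identify and repair this yourself, so the argument matches the paper's.
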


\begin{proof}
Since the right-hand-side of (\ref{onestepineq}) in Lemma \ref{onestep}
 is summable, we choose $x=x^*$ for some optimal solution and take summation of the inequality (\ref{onestepineq}) from $1$ to $k$, then we get
\begin{equation}\label{Feb_16_2}
\sum_{i=1}^{k}\beta_i \left( \L(x^{i+1},\lambda)-\L(x^*,\lambda)\right)\leq 
D(\lambda) := \Phi_1(x^*,\lambda).
\end{equation}
Note that when deriving (\ref{Feb_16_2}), we have ignored many nonnegative terms in (\ref{onestepineq}). From the convexity of $\L(x,\lambda)$ as a function of $x$, we have 
\begin{equation}
\gamma_k\left( \L(v^k,\lambda)-\L(x^*,\lambda)\right)\leq D(\lambda).
\end{equation}
By applying Lemma~\ref{useful} to the above inequality, we get Theorem~\ref{ergodic}.
\end{proof}

Since $\beta_i\geq \beta$ for any $i$, then $\gamma_k=\Omega(k)\footnote{A sequence $\{a_k\}_{k\in \mathbb{N}^+}$ is said to be $\Omega(k)$ if there exists some positive number $c$ and integer $N_0$ such that $a_k\geq c k$ for any $k\geq N_0.$}.$ The following corollary can be derived from Theorem~\ref{ergodic} directly.

\begin{coro}\label{ergodic-convergence}
Let $v^k$, $\gamma_k$ be defined as in Theorem~\ref{ergodic}. Then we have $$|\F(v^k)-\F(x^*)|=O(1/k),\ \ll \A v^k-b\ll=O(1/k).$$ Moreover, if $\gamma_k=\Omega(k^2),$ then $$|\F(v^k)-\F(x^*)|=O(1/k^2),\ \ll \A v^k-b\ll=O(1/k^2).$$
\end{coro}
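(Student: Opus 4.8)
The plan is to derive both rate estimates directly from Theorem~\ref{ergodic} by controlling the growth of $\gamma_k=\sum_{i=1}^k\beta_i$ from below. The only structural input I would need beyond the conclusion of Theorem~\ref{ergodic} is that step~4 of Algorithm~\ref{alg:1} forces every penalty parameter to satisfy $\beta_i\geq\beta>0$, since the left endpoint of the selection interval is $\max\{\beta,\tau\beta_i\}\geq\beta$. Thus the entire task reduces to translating the $O(1/\gamma_k)$ rates into explicit powers of $k$.

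First I would establish the linear lower bound $\gamma_k\geq\beta k$. Summing $\beta_i\geq\beta$ over $i=1,\dots,k$ gives $\gamma_k=\sum_{i=1}^k\beta_i\geq\beta k$, hence $1/\gamma_k\leq 1/(\beta k)$, so that any quantity which is $O(1/\gamma_k)$ is automatically $O(1/k)$. Substituting this observation into the two estimates of Theorem~\ref{ergodic} immediately yields
$$|\F(v^k)-\F(x^*)|=O(1/k),\qquad \ll\A v^k-b\ll=O(1/k),$$
which is the first assertion of the corollary.

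For the accelerated statement I would simply invoke the additional hypothesis $\gamma_k=\Omega(k^2)$. By the definition of $\Omega(k^2)$ recorded in the footnote, there exist a constant $c>0$ and an integer $N_0$ with $\gamma_k\geq c k^2$ for all $k\geq N_0$, so $1/\gamma_k\leq 1/(ck^2)$ and therefore every $O(1/\gamma_k)$ quantity is $O(1/k^2)$. Feeding this back into the two bounds of Theorem~\ref{ergodic} gives the claimed $O(1/k^2)$ rates on both the objective gap $|\F(v^k)-\F(x^*)|$ and the primal infeasibility $\ll\A v^k-b\ll$.

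At this level there is essentially no obstacle: the analytic content is entirely carried by Theorem~\ref{ergodic} (and, beneath it, by the one-step energy inequality of Lemma~\ref{onestep}), and the corollary is only a change of scale from the $\gamma_k$-axis to the $k$-axis. The single point warranting care is confirming that the adaptive rule in step~4 genuinely guarantees the uniform lower bound $\beta_i\geq\beta$, which legitimizes the passage $\gamma_k\geq\beta k$; once this is noted, both parts follow by direct substitution.
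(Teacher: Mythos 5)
Your argument is correct and is exactly the paper's: the authors also derive the corollary directly from Theorem~\ref{ergodic} by noting that $\beta_i\geq\beta$ forces $\gamma_k=\Omega(k)$, and then substituting the hypothesis $\gamma_k=\Omega(k^2)$ for the accelerated rate. You have merely written out the one-line substitution in more detail, including the correct observation that the lower endpoint $\max\{\beta,\tau\beta_k\}\geq\beta$ of the interval in Step~4 is what guarantees $\beta_i\geq\beta$.
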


\noindent{\bf Remark.} Note that from Corollary~\ref{ergodic-convergence}, the convergence rate is at least $O(1/k)$, even if $\sigma_g=0$. Also, when $\sigma_g > 0$, we see that it is possible to choose $\beta_i$ such that $\beta_i=\Omega(i)$. Indeed, if we choose $\beta_{i+1}$ to be the upper bound of the interval in Step 4 of Algorithm 1 at every iteration, we get $\beta_i=\Omega(i)$ and so $\gamma_k=\Omega(k^2).$ Thus, our algorithm can achieve the convergence rate 
of $O(1/k^2)$ when $\sigma_g > 0.$ However, we should note that  even though the objective value gap and primal feasibility decrease at the rate of $O(1/k^2)$, the dual feasibility may not. In the next subsection, we will establish the convergence the the sequence $(x^k,\lambda^k ).$


\subsection{Nonergodic convergence of iteration points}
In this section, we give a proof of the convergence of the iteration points of IADMM. Suppose $(x^*,\lambda^*)$ is a KKT solution. Our convergence theorem is as follows.

\begin{theo}\label{iteration}
Suppose $\beta_kP_k+\beta_k^2 B^\top B\succeq \Theta \;\forall\; k$ for some $\Theta\succ 0$. Then $(y^k,z^k,\lambda^k)$ converges to a KKT solution $(y^*,z^*,\lambda^*)$ as $k\to\infty.$
\end{theo}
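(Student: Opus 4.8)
The plan is to use the energy function $\Phi_k$ from Lemma~\ref{onestep} as a Lyapunov function and combine a Fej\'er-type monotonicity argument with the injectivity supplied by the hypothesis $\beta_k P_k + \beta_k^2 B^\top B \succeq \Theta \succ 0$. First I would fix the KKT solution $(x^*,\lambda^*)$ and apply Lemma~\ref{onestep} with $(x,\lambda)=(x^*,\lambda^*)$, which is legitimate since $\A x^*-b=0$. Because $x^*$ minimizes $\L(\cdot,\lambda^*)$, we have $\L(x^{k+1},\lambda^*)-\L(x^*,\lambda^*)\ge 0$, so every term on the left of \eqref{onestepineq} is nonnegative; the inequality then gives $\Phi_{k+1}(x^*,\lambda^*)\le\Phi_k(x^*,\lambda^*)$, so $\{\Phi_k(x^*,\lambda^*)\}$ is nonincreasing and convergent. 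Summing \eqref{onestepineq} over $k$ shows that $\sum_k\beta_k^2\norm{\A x^k-b}^2$, $\sum_k\beta_k^2\norm{C(z^{k+1}-z^k)}^2$, $\sum_k\beta_k\norm{y^{k+1}-y^k}_{P_k}^2$, $\sum_k\beta_k^2\norm{z^{k+1}-z^k}_Q^2$ and (using $\sigma_g>0$) $\sum_k\beta_k\norm{z^*-z^{k+1}}^2$ are all finite. Since $\beta_k\ge\beta>0$, the last sum forces $z^k\to z^*$, so the $z$-component of every subsequential limit equals the unique $z^*$.

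Next I would establish boundedness and identify limits. Convergence of $\Phi_k(x^*,\lambda^*)$ bounds $\norm{\lambda^k-\lambda^*}$ and $\beta_k\norm{y^k-y^*}_{P_k}^2$. To bound $\{y^k\}$ I invoke the hypothesis: writing $B(y^k-y^*)=(\A x^k-b)-C(z^k-z^*)$ and using that $\beta_k^2\norm{\A x^k-b}^2$ and $\beta_k^2\norm{C(z^k-z^*)}^2$ are bounded, I get $\beta_k^2\norm{B(y^k-y^*)}^2$ bounded; then $\norm{y^k-y^*}_\Theta^2\le\beta_k\norm{y^k-y^*}_{P_k}^2+\beta_k^2\norm{B(y^k-y^*)}^2$ is bounded, and $\Theta\succ0$ yields $\{y^k\}$ bounded. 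Along any subsequence on which $(y^{k+1},\lambda^{k+1})\to(\bar y,\bar\lambda)$, I pass to the limit in the first-order conditions of the two subproblems and the multiplier update. The summability above gives $\lambda^{k+1}-\lambda^k\to0$, $\beta_k\norm{C(z^{k+1}-z^k)}\to0$ and $\norm{y^{k+1}-y^k}_{P_k}\to0$; combined with the uniform bound $P_k\preceq\beta^{-1}\beta_1P_1$ (from $\beta_{k+1}P_{k+1}\preceq\beta_kP_k$) this makes the proximal and cross terms vanish, and outer semicontinuity of $\partial f,\partial g$ gives $0\in\partial f(\bar y)+B^\top\bar\lambda$, $0\in\partial g(z^*)+C^\top\bar\lambda$ and $B\bar y+Cz^*-b=0$, so $(\bar y,z^*,\bar\lambda)$ is a KKT solution.

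The last and most delicate step is uniqueness of the limit. Here the requirement $\beta_{k+1}P_{k+1}\preceq\beta_kP_k$ is essential: the sequence $\{\beta_kP_k\}$ of PSD matrices is nonincreasing and bounded below, hence converges to some $P_\infty\succeq0$. Given two subsequential limits $(\bar y_1,z^*,\bar\lambda_1)$ and $(\bar y_2,z^*,\bar\lambda_2)$, the terms of $\Phi_k$ not depending on the chosen solution cancel (the $z^*$-terms because $z^*$ is unique), so $\Phi_k(\bar x_1,\bar\lambda_1)-\Phi_k(\bar x_2,\bar\lambda_2)$ reduces to $\frac{1}{2\gamma}(\norm{\bar\lambda_1-\lambda^k}^2-\norm{\bar\lambda_2-\lambda^k}^2)+\frac12(\norm{\bar y_1-y^k}_{\beta_kP_k}^2-\norm{\bar y_2-y^k}_{\beta_kP_k}^2)$, which converges by the first step. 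Evaluating its limit along the two subsequences and subtracting gives $\frac1\gamma\norm{\bar\lambda_1-\bar\lambda_2}^2+\norm{\bar y_1-\bar y_2}_{P_\infty}^2=0$, hence $\bar\lambda_1=\bar\lambda_2$ and $P_\infty(\bar y_1-\bar y_2)=0$. Feasibility of both limits gives $B(\bar y_1-\bar y_2)=0$, so $\norm{\bar y_1-\bar y_2}_{\beta_kP_k+\beta_k^2B^\top B}^2=\beta_k\norm{\bar y_1-\bar y_2}_{P_k}^2\to\norm{\bar y_1-\bar y_2}_{P_\infty}^2=0$, while the hypothesis bounds the same quantity below by $\norm{\bar y_1-\bar y_2}_\Theta^2$; since $\Theta\succ0$ this forces $\bar y_1=\bar y_2$. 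Thus all subsequential limits coincide and $(y^k,z^k,\lambda^k)$ converges.

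I expect the uniqueness step to be the main obstacle, because the natural distance-to-solution metric $\beta_kP_k$ varies with $k$, so the standard Opial lemma does not apply verbatim. The crux is to first extract the limit $\beta_kP_k\to P_\infty$ from the algorithm's monotonicity requirement and then recover $\bar y_1=\bar y_2$ through the interplay of $P_\infty(\bar y_1-\bar y_2)=0$, $B(\bar y_1-\bar y_2)=0$ and $\Theta\succ0$ — precisely the place where the assumption $\beta_kP_k+\beta_k^2B^\top B\succeq\Theta$ is indispensable, and the reason it appears in the theorem's hypothesis.
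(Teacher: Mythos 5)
Your argument is correct, but it departs from the paper's proof at the decisive step of upgrading subsequential convergence to convergence of the whole sequence. The paper does \emph{not} run an Opial-type uniqueness argument: it uses the bound $\beta_k=O(k)$ (hence $\sum_k 1/\beta_k=\infty$) together with the summability in \eqref{Feb_16_3} to extract a \emph{special} subsequence $\{j_k\}$ along which the $\beta^2$-weighted quantities $\beta_{j_k}^2\|z^*-z^{j_k+1}\|^2$ and $\beta_{j_k}^2\|z^{j_k}-z^{j_k+1}\|^2$ vanish; it identifies the limit along $\{j_k\}$ as a KKT point, re-anchors $\Phi_k$ at that point, proves $\Phi_{j_k+1}\to 0$ and hence $\Phi_k\to 0$ by monotonicity, and reads off convergence of every component from $\Phi_k\to 0$. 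You instead show that \emph{every} subsequential limit is a KKT point with $z$-component $z^*$, and then remove the ambiguity with a varying-metric Fej\'er argument. The reason your route succeeds where a naive Opial argument would stall is exactly the observation you make: the troublesome term $\frac{\beta_k^2}{2}\|z-z^k\|^2_{C^\top C+Q}$ of $\Phi_k$ --- whose decay the paper can only secure along the special subsequence --- cancels identically in the difference $\Phi_k(\bar x_1,\bar\lambda_1)-\Phi_k(\bar x_2,\bar\lambda_2)$ because any two limits share the same $z^*$; what survives involves only $\lambda^k$ and the metric $\beta_kP_k$, which converges to some $P_\infty\succeq 0$ by Loewner monotonicity, and the hypothesis $\beta_kP_k+\beta_k^2 B^\top B\succeq\Theta\succ 0$ combined with $B(\bar y_1-\bar y_2)=0$ then forces $\bar y_1=\bar y_2$. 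The trade-off: the paper's route yields the slightly stronger quantitative conclusion $\Phi_k(x^*,\lambda^*)\to 0$, while yours dispenses entirely with Fact 2 and with the growth bound $\beta_k=O(k)$, so it is somewhat more robust to the choice of update rule for $\beta_k$; up to and including the boundedness of the iterates and the demi-closedness passage to the limit in the optimality conditions, the two proofs share the same skeleton.
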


\begin{proof}
If we choose $(x,\lambda)=(x^*,\lambda^*)$ in Lemma~\ref{onestep} and define
the quantity
\begin{eqnarray}\label{defiphi}
\Phi_k :=  \Phi_k(x^*,\lambda^*) &=&  \frac{1}{2\gamma} \norm{\lambda^*-\lambda^k}^2 +
\frac{(2-\gamma)\beta_{k-1}^2}{2}\ll \A x^k-b\ll^2+\frac{\beta_k^2}{2}\ll z^*-z^k\ll_{C^\top C+Q}^2\notag
\\
&& +\frac{\beta_{k-1}^2}{2}\ll z^k-z^{k-1}\ll^2_{Q}
+\frac{\beta_k}{2} \norm{ y^*-y^k}^2_{P_k},
\end{eqnarray}
we get
\begin{eqnarray}\label{iter}
\left. \begin{array}{l}
\frac{\delta\beta_{k-1}^2}{2\gamma}\ll \A x^{k}-b\ll^2
+\frac{\beta_k^2}{2}\norm{ z^k-z^{k+1}}^2_Q
+\frac{\delta\beta_k^2}{2}\norm{C(z^k-z^{k+1})}^2
\\[5pt]
+\frac{\beta_k}{2}\norm{ y^k-y^{k+1}}^2_{P_k}
+\sigma_g\beta_k\ll z^k-z^{k+1}\ll^2+\frac{\epsilon \sigma_g \beta_k}{2}\ll z^*-z^{k+1}\ll^2
\end{array}\right\} 
\leq \Phi_k-\Phi_{k+1},
\end{eqnarray}
Note that when deriving the above inequality, we have used the inequality $\L(x^{k+1},\lambda^*)-\L(x^*,\lambda^*)\geq 0.$ By taking summation in \eqref{iter}, we have that 
the infinite sum on the left-hand-side sequence is finite. Thus we have the following fact.
\\[5pt]
\noindent{\bf Fact 1.} 
$ \beta_{k-1}^2 \ll \A x^{k}-b\ll^2=o(1)$, 
$\beta_k\norm{ y^k-y^{k+1}}^2_{P_k}=o(1),$  
$\beta_k^2\norm{ z^k-z^{k+1}}^2_{C^\top C+Q}=o(1),$
$\beta_k\norm{z^{k+1}-z^*}^2=o(1)$, 
and 
\begin{equation}\label{Feb_16_3}
\sum\limits_{k=1}^\infty \frac{\sigma_g}{\beta_k}\left( \beta_k^2\ll z^k-z^{k+1}\ll^2+\frac{\epsilon \beta_k^2}{2}\ll z^*-z^{k+1}\ll^2\right) \;<\; \infty.
\end{equation}
Moreover, for the sequence of parameters $\{\beta_{k}\}$ in Step 4, 
 we can easily prove that $\beta_k\leq \beta_1+\frac{(1-\epsilon)\sigma_g (k-1)}{\lambda_{\max}(C^\top C+Q)}=O(k),$ which implies that $\sum_{k=1}^{\infty}1/\beta_k=\infty.$
Together with the fact that $\sigma_g > 0$ and (\ref{Feb_16_3}), we have the following result.
\\[5pt]
\noindent{\bf Fact 2.}  $\liminf\limits_{k\rightarrow \infty}\beta_k^2 \ll z^k-z^{k+1}\ll^2+\frac{\epsilon\beta_k^2}{2}\ll z^*-z^{k+1}\ll^2=0.$

\medskip
From \eqref{iter}, we know that $\{\Phi_k\}$ is a nonincreasing sequence and it is bounded. Hence, from the definition of $\Phi_k$ in (\ref{defiphi}), we have the boundedness of the following sequences:
\begin{eqnarray}
 \{ \norm{\lambda^*-\lambda^k}\}, \quad \{\sigma_g\beta_k\norm{z^*-z^k}^2\},
  \quad \{\beta_k^2\norm{z^*-z^k}^2_{C^\top C + Q}\}, \quad
 \{\beta_k\norm{y^*-y^k}_{P_k}^2 \},
 \label{eq-bounded}
\end{eqnarray} 
where the boundedness of the second term comes from Fact 1 and the boundedness of $\{ \beta_{k+1}/\beta_k\}.$
Since $\{\beta_{k}/\beta_{k+1}\}$ is bounded, from the third sequence in (\ref{eq-bounded}), $\{ \beta_{k}^2 \norm{C(z^{k+1}-z^*)}^2\}$ is also bounded.
Next we show that $\{\norm{y^*-y^k}\}$ is bounded. 
From the convexity of the function $\|\cdot\|^2$ and $\A x^{k+1}-b=B(y^{k+1}-y^*)+C(z^{k+1}-z^*),$ we have the following inequality
\begin{eqnarray*}
\beta_k^2 \norm{B(y^{k+1}-y^*)}^2 
\leq 2 \beta_k^2 \norm{\A x^{k+1} -b}^2 + 2 \beta_k^2 \norm{C(z^{k+1}-z^*)}^2,
\end{eqnarray*}
then the boundedness of $\{\beta_k^2 \norm{\A x^{k+1} -b}^2 \}$ in Fact 1 and that of
$\{ \beta_k^2 \norm{C(z^{k+1}-z^*)}\}$ (just mentioned above) imply that 
$\{ \beta_k^2 \norm{y^{k+1}-y^*}^2_{B^\top B}\}$ is  bounded. 
Because $\{\beta_{k+1}/\beta_k\}$ is bounded,  $\{ \beta_{k+1}^2 \norm{y^{k+1}-y^*}^2_{B^\top B}\}$ is also bounded. From the condition in the theorem, we have the fact that 
$\norm{y^*-y^k}_\Theta^2 \leq \beta_k \norm{y^*-y^k}_{P_k}^2 + 
 \beta_k^2 \norm{y^*-y^k}_{B^\top B}^2$ and $\Theta \succ 0$. This implies that 
$\{\norm{y^*-y^k}\}$ is bounded. 

From the above results, we can conclude that $\{(y^k,z^k,\lambda^k)\}$ is bounded. From Fact 1, we have that $z^k\rightarrow z^*.$ Combine these two results and Fact 2, we can see that there exists a sequence $\{ j_k\}_{k\geq 1}$ 
such that $(y^{j_k+1},z^{j_k+1},\lambda^{j_k+1})$ converges to a limit point 
$(y,z^*,\lambda)$, and 
$\beta_{j_k}^2\ll z^{j_k}-z^{j_k+1}\ll^2=o(1),$ 
$\beta_{j_k}^2\ll z^*-z^{j_k+1}\ll^2=o(1)$.
We summarize the results as follows.
\\[5pt]
\noindent{\bf Fact 3.} $(y^{j_k+1},z^{j_k+1},\lambda^{j_k+1})\rightarrow (y,z^*,\lambda)$, $\beta_{j_k}^2\ll z^{j_k}-z^{j_k+1}\ll^2=o(1),$ $\beta_{j_k}^2\ll z^*-z^{j_k+1}\ll^2=o(1).$

\medskip
We will next show that $\norm{y^{j_k}-y^{j_k+1}} = o(1).$ 
First, from the fact that $\beta_k^2\norm{\A x^{k+1}-b}^2 = o(1)$ in Fact 1 and the 
boundedness of $\{\beta_{k+1}/\beta_k\}$, we can readily show
that $\beta_k^2\norm{\A(x^{k+1}-x^k)}^2 = o(1).$ Next, from 
\begin{eqnarray*}
 & & \hspace{-7mm}\beta_{j_k}^2\norm{B(y^{j_k+1}-y^{j_k})}^2 
 \leq 2 \beta_{j_k}^2 \norm{\A(x^{j_k+1}-x^{j_k})}^2 
 + 2\beta_{j_k}^2 \norm{C(z^{j_k+1}-z^{j_k})}^2
 \\[5pt]
 &\leq & 2 \beta_{j_k}^2 \norm{\A(x^{j_k+1}-x^{j_k})}^2 
 + 2\lambda_{\max}(C^\top C)\beta_{j_k}^2 \norm{z^{j_k+1}-z^{j_k}}^2,
\end{eqnarray*}
and Fact 3, we get $\beta_{j_k}^2\norm{B(y^{j_k+1}-y^{j_k})}^2 = o(1).$
From Fact 1, we also have 
$\beta_{j_k}\norm{y^{j_k}-y^{j_k+1}}_{P_{j_k}}^2 = o(1)$.
Thus, from the condition in the theorem, $
\norm{y^{j_k+1}-y^{j_k}}^2_\Theta 
\leq  \beta_{j_k}\norm{y^{j_k}-y^{j_k+1}}_{P_{j_k}}^2 + 
\beta_{j_k}^2\norm{B(y^{j_k+1}-y^{j_k})}^2 = o(1).$
Since $\Theta \succ 0$, this implies that $\norm{y^{j_k+1}-y^{j_k}} = o(1).$

\medskip
From \eqref{eq-opt-f} and \eqref{eq-opt-g} in the Appendix, which are the optimality conditions
in Step 1 and Step 2 of Algorithm 1, we know that 
\begin{align}\label{demi}
&P_{j_k}(y^{j_k}-y^{j_k+1})+\beta_{j_k}B^\top C(z^{j_k+1}-z^{j_k})+(\gamma-1)\beta_{j_k} B^\top (\A x^{j_k+1}-b)\in \partial f(y^{j_k+1})+B^\top \lambda^{j_k+1}\notag \\
&Q_{j_k}(z^{j_k}-z^{j_k+1})+(\gamma-1)\beta_{j_k}C^\top (\A x^{j_k+1}-b)\in \partial g(z^{j_k+1})+C^\top \lambda^{j_k+1}.
\end{align}
Since $\norm{y^{j_k+1}-y^{j_k}} = o(1)$ and $0\preceq P_{j_k} \preceq (\beta_{j_1}/\beta) P_{j_1}$, we have that $P_{j_k}(y^{j_k}-y^{j_k+1})\rightarrow 0$. From Fact 1, all the following terms, $\beta_{j_k}B^\top C(z^{j_k+1}-z^{j_k})$, $Q_{j_k}(z^{j_k}-z^{j_k+1})$, $\beta_{j_k}C^\top (\A x^{j_k+1}-b)$ and $\beta_{j_k}B^\top(\A x^{j_k+1}-b)$ converge to 0. With all of the mentioned convergence results and the demi-closedness of $\partial f$ and $\partial g$, after letting $k\rightarrow \infty$ in (\ref{demi}), we have that
\begin{equation}
0\in \partial f(y)+B^\top \lambda, \quad \ 0\in \partial g(z^*)+C^\top \lambda.\notag
\end{equation} 
Together with $\A x-b=\lim_{k\rightarrow \infty}\A x^{j_k+1}-b=0$, we know that $(y,z^*,\lambda)$ is a KKT solution. For convenience, we let $(y,z^*,\lambda)=(y^*,z^*,\lambda^*).$

From \eqref{iter}, we know that
$0\leq \Phi_{k+1} \leq \Phi_k$ and $\lim_{k\to\infty} \Phi_k$ exists.  From the condition of $P_k$ in Algorithm~\ref{alg:1}, we have that $\beta_{j_k+1}\| y^*-y^{j_k+1}\|^2_{P_{j_k+1}}\leq \beta_1\lambda_{\max}(P_1)\| y^*-y^{j_k+1}\|^2=o(1)$.
Combine this and Fact 1, Fact 3, the boundedness of $\{\beta_{j_k+1}/\beta_{j_k}\}$ and the definition of $\Phi_{k}$, we have that
$\lim_{k\to \infty}\Phi_{j_{k}+1} = 0$. Thus
$\lim_{k\to \infty} \Phi_k = 0$. From here, we get
\begin{eqnarray}\label{Feb_16_4}
 \beta_k\norm{y^*-y^k}_{P_k}^2 = o(1), \quad
 \beta_k^2\norm{z^*-z^k}^2_{C^\top C + Q} = o(1), \quad \| \lambda^*-\lambda^k\|^2=o(1),
\end{eqnarray}
which implies that $\lim_{k\rightarrow \infty}\lambda^k=\lambda^*.$ Note that from Fact 1 and $\beta_k\geq \beta>0$ for all $k$,
we have $\norm{z^*-z^k} = o(1)$, i.e., $\lim_{k\to\infty}z^k = z^*$. 
Moreover, from $\lim_{k\rightarrow \infty}\Phi_k=0,$ we have that
\begin{eqnarray}\label{Feb_16_5}
 \beta_k^2\norm{B(y^k-y^*)}^2 \leq 2 \Big(\frac{\beta_k}{\beta_{k-1}}\Big)^2\beta_{k-1}^2
 \norm{\A x^k - b}^2 + 2\beta_k^2\norm{C(z^k-z^*)}^2 = o(1).
\end{eqnarray}
From (\ref{Feb_16_4}), (\ref{Feb_16_5}), $\norm{y^*-y^k}_\Theta^2 \leq \beta_k\norm{y^*-y^k}_{P_k}^2 + 
\beta_k^2 \norm{B(y^k-y^*)}^2=o(1)$ and $\Theta \succ 0$, we get
$\norm{y^*-y^k}=o(1)$, i.e., $\lim_{k\to\infty}y^k = y^*$.
The proof is completed.
\end{proof}

\begin{rem} In Theorem \ref{iteration}, we have assumed that $\sigma_g >0$. 
By modifying the proof slightly, one can prove that the theorem also holds
when $\sigma_g = 0$, provided $C^\top C + Q \succ 0$. Note that in this case, $\beta_k$ decreases monotonically.
More specifically, when $C^\top C + Q \succ 0$, one can see from 
\eqref{iter} that $\{\beta_k^2\norm{z^*-z^k}^2\}$ is bounded.
From there, one can show that the results in Fact 3 are valid, and the 
rest of the proof of  Theorem \ref{iteration} can carry through.
\end{rem}

\subsection{Nonergodic linear convergence}\label{Subsec-linear}
In this subsection, in addition to assuming that $g$ is strongly convex with parameter $\sigma_g >0$, we also
assume that  it is continuously differentiable and
$\nabla g$ is Lipschitz continuous with parameter $L_g$.
Moreover, we choose  $P_k=0$ for all $k\geq 1$
and assume that Step 1 of Algorithm 1 is well defined. To begin with, we choose $(x,\lambda)=(x^*,\lambda^*)$ in Lemma~\ref{onestep} to get the following lemma.

\begin{lem}\label{energy1} For any KKT solution $(x^*,\lambda^*),$
\begin{align}
&\beta_k\left( \L(x^{k+1},\lambda^*)-\L(x^*,\lambda^*)\right)+
\frac{(2-\gamma)\beta_k^2}{2}\norm{\A x^{k+1}-b}^2
+\left(\beta_k^2+\frac{\sigma_g\beta_k}{\lambda_{\max}(Q)}\right)\ll z^{k+1}-z^k\ll^2_Q\notag\\
&+\left(\frac{\beta_{k+1}^2}{2}+\frac{\epsilon \sigma_g\beta_k}{4\lambda_{\max}(C^\top C+Q)}\right)\ll z^*-z^{k+1}\ll^2_{C^\top C+Q}+\frac{1}{2\gamma}\ll \lambda^*-\lambda^{k+1}\ll^2\notag \\
&\leq \frac{(2-\gamma-\delta/\gamma)\beta_{k-1}^2}{2}\ll \A x^k-b\ll^2
+\frac{\beta_{k-1}^2}{2}\ll z^k-z^{k-1}\ll^2_Q+\frac{\beta_k^2}{2}\ll z^*-z^k\ll^2_{C^\top C+Q}+\frac{1}{2\gamma}\ll \lambda^*-\lambda^{k}\ll^2\notag \\
&\quad -\frac{\epsilon \sigma_g\beta_k}{4}\ll z^*-z^{k+1}\ll^2.\label{ieqlong}
\end{align}
\end{lem}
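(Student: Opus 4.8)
The plan is to obtain \eqref{ieqlong} as a direct rearrangement of the master inequality in Lemma~\ref{onestep}, specialized to the feasible point $(x,\lambda)=(x^*,\lambda^*)$ and to the present setting $P_k=0$. First I would substitute $(x^*,\lambda^*)$ into \eqref{onestepineq}; since $x^*$ is feasible we have $\A x^*-b=0$, and since $P_k=0$ the two proximal terms $\frac{\beta_k}{2}\norm{y^k-y^{k+1}}_{P_k}^2$ and $\frac{\beta_k}{2}\norm{y^*-y^k}_{P_k}^2$ drop out of both the left-hand side of \eqref{onestepineq} and of $\Phi_k,\Phi_{k+1}$ in \eqref{eq-Phi}. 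This already reduces the statement to an inequality involving only the $\lambda$-, $\A x-b$-, and $z$-blocks. Note that only $\sigma_g>0$ and $P_k=0$ are used here; the differentiability and Lipschitz hypotheses on $g$ play no role in this lemma and are reserved for the linear-convergence theorem to follow.

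Next I would expand $\Phi_k(x^*,\lambda^*)-\Phi_{k+1}(x^*,\lambda^*)$ using \eqref{eq-Phi} and move every term carrying index $k+1$ (namely $\frac{1}{2\gamma}\norm{\lambda^*-\lambda^{k+1}}^2$, $\frac{(2-\gamma)\beta_k^2}{2}\norm{\A x^{k+1}-b}^2$, $\frac{\beta_{k+1}^2}{2}\norm{z^*-z^{k+1}}_{C^\top C+Q}^2$, and $\frac{\beta_k^2}{2}\norm{z^{k+1}-z^k}_Q^2$) to the left. Two coefficient merges then occur exactly as claimed: the $\norm{\A x^k-b}^2$ contribution on the left of \eqref{onestepineq}, with coefficient $\frac{\delta\beta_{k-1}^2}{2\gamma}$, combines with the $\frac{(2-\gamma)\beta_{k-1}^2}{2}$ term from $\Phi_k$ to leave the coefficient $\frac{(2-\gamma-\delta/\gamma)\beta_{k-1}^2}{2}$ on the right; and the $\frac{\beta_k^2}{2}\norm{z^k-z^{k+1}}_Q^2$ term already present on the left of \eqref{onestepineq} adds to the relocated $\frac{\beta_k^2}{2}\norm{z^{k+1}-z^k}_Q^2$ to give the coefficient $\beta_k^2$. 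At this stage the nonnegative term $\frac{\delta\beta_k^2}{2}\norm{C(z^{k+1}-z^k)}^2$ sits on the left; since $\delta>0$ I would simply discard it, which only weakens the left-hand side and hence preserves the inequality.

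It remains to convert the two strong-convexity terms into the weighted forms appearing in \eqref{ieqlong}. For the first, I would use $\norm{w}_Q^2\le \lambda_{\max}(Q)\norm{w}^2$ with $w=z^{k+1}-z^k$ to bound $\sigma_g\beta_k\norm{z^k-z^{k+1}}^2\ge \frac{\sigma_g\beta_k}{\lambda_{\max}(Q)}\norm{z^{k+1}-z^k}_Q^2$, producing the extra summand attached to the $Q$-norm block on the left. For the second, I would split $\frac{\epsilon\sigma_g\beta_k}{2}\norm{z^*-z^{k+1}}^2$ into two equal halves: one half stays in the Euclidean norm and, when transferred across, becomes the term $-\frac{\epsilon\sigma_g\beta_k}{4}\norm{z^*-z^{k+1}}^2$ on the right; the other half is lowered via $\norm{w}^2\ge \frac{1}{\lambda_{\max}(C^\top C+Q)}\norm{w}_{C^\top C+Q}^2$ to yield $\frac{\epsilon\sigma_g\beta_k}{4\lambda_{\max}(C^\top C+Q)}\norm{z^*-z^{k+1}}_{C^\top C+Q}^2$, which merges with the relocated $\frac{\beta_{k+1}^2}{2}\norm{z^*-z^{k+1}}_{C^\top C+Q}^2$. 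Collecting everything reproduces \eqref{ieqlong} verbatim.

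The argument is essentially bookkeeping, so I do not expect a genuine obstacle; the only non-mechanical decisions are the choice of which $z$-terms to keep in Euclidean form and which to convert through the eigenvalue bounds, and the $\tfrac12$-$\tfrac12$ split of the $\epsilon\sigma_g$ term that simultaneously feeds the weighted block on the left and the negative Euclidean remainder on the right. Care is mainly needed to track signs when relocating the index-$(k+1)$ pieces of $\Phi_{k+1}$ and to confirm that the discarded $C$-difference term is indeed nonnegative.
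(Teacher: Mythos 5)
Your proposal is correct and follows essentially the same route as the paper: substitute $(x^*,\lambda^*)$ into Lemma~\ref{onestep} with $P_k=0$, relocate the index-$(k+1)$ pieces of $\Phi_{k+1}$ to the left, merge the $\norm{\A x^k-b}^2$ and $\norm{z^{k+1}-z^k}_Q^2$ coefficients, discard the nonnegative $\frac{\delta\beta_k^2}{2}\norm{C(z^{k+1}-z^k)}^2$ term, and convert the two strong-convexity terms via the $\lambda_{\max}$ bounds, with the half-and-half split of the $\epsilon\sigma_g$ term. (Your eigenvalue inequalities are even written in the correct direction, whereas the paper's stated inequality for the $C^\top C+Q$ step has its inequality sign typographically reversed.)
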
 

We also need the following lemma, which is motivated from Lemma 3.2 in \cite{WY}.

\begin{lem}\label{wy} For any $0<\alpha<\frac{1}{2}$,
\begin{align}\label{thir6}
&(1-2\alpha)\lambda_{\min}(CC^\top) \ll \lambda^{k+1}-\lambda^*\ll^2\notag \\
&\leq 
\frac{1}{\alpha}\lambda_{\max}(CC^\top)(1-\gamma)^2\beta_k^2\ll \A x^{k+1}-b\ll^2+\frac{1}{\alpha}\lambda_{\max}(Q)\beta_k^2\ll z^{k+1}-z^k\ll^2_Q+L_g^2\ll z^{k+1}-z^*\ll^2.
\end{align}
\end{lem}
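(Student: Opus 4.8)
The plan is to convert the optimality condition of the $z$-subproblem into an explicit formula for $C^\top(\lam^{k+1}-\lam^*)$, and then to read off the desired bound on $\norm{\lam^{k+1}-\lam^*}$ using the full-row-rank of $C$. First I would write down the optimality condition of Step~2 of Algorithm~\ref{alg:1} (this is \eqref{eq-opt-g} in the appendix); since $g$ is differentiable and $Q_k=\beta_k Q$, it reads
\[
0=\nabla g(z^{k+1})+C^\top\lam^k+\beta_k C^\top(\A x^{k+1}-b)+\beta_k Q(z^{k+1}-z^k).
\]
Substituting $\lam^k=\lam^{k+1}-\gam\beta_k(\A x^{k+1}-b)$ from Step~3 and subtracting the KKT relation $0=\nabla g(z^*)+C^\top\lam^*$ then gives the identity
\[
C^\top(\lam^{k+1}-\lam^*)=-\big(\nabla g(z^{k+1})-\nabla g(z^*)\big)-(1-\gam)\beta_k C^\top(\A x^{k+1}-b)-\beta_k Q(z^{k+1}-z^k).
\]

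It is convenient to abbreviate $u:=C^\top(\lam^{k+1}-\lam^*)$, $a:=\nabla g(z^{k+1})-\nabla g(z^*)$, and $d:=(1-\gam)\beta_k C^\top(\A x^{k+1}-b)+\beta_k Q(z^{k+1}-z^k)$, so that the identity becomes $a=-u-d$. The key algebraic step is to extract $\norm{u}^2$ with exactly the coefficients appearing in \eqref{thir6}. I would expand $\norm{a}^2=\norm{u+d}^2=\norm{u}^2+2\inprod{u}{d}+\norm{d}^2$, rearrange to $\norm{u}^2\le\norm{a}^2-2\inprod{u}{d}$, and apply Young's inequality in the form $-2\inprod{u}{d}\le 2\alpha\norm{u}^2+\tfrac{1}{2\alpha}\norm{d}^2$, which yields $(1-2\alpha)\norm{u}^2\le\norm{a}^2+\tfrac{1}{2\alpha}\norm{d}^2$. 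The reason for handling $\norm{a}^2$ exactly, rather than through a second application of Young's inequality, is precisely to keep its coefficient equal to $1$, so that after the Lipschitz bound $\norm{a}^2\le L_g^2\norm{z^{k+1}-z^*}^2$ it matches the final term of \eqref{thir6}.

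It then remains to unpack $\norm{d}^2$ and $\norm{u}^2$. For the right-hand side I would use $\norm{d}^2\le 2(1-\gam)^2\beta_k^2\norm{C^\top(\A x^{k+1}-b)}^2+2\beta_k^2\norm{Q(z^{k+1}-z^k)}^2$ together with the Rayleigh-quotient estimates $\norm{C^\top w}^2\le\lambda_{\max}(CC^\top)\norm{w}^2$ and $\norm{Qw}^2\le\lambda_{\max}(Q)\norm{w}_Q^2$; the factor $\tfrac{1}{2\alpha}$ times the leading $2$'s combine into the $\tfrac1\alpha$ in front of the first two terms of \eqref{thir6}. For the left-hand side I would use $\norm{u}^2=\norm{C^\top(\lam^{k+1}-\lam^*)}^2\ge\lambda_{\min}(CC^\top)\norm{\lam^{k+1}-\lam^*}^2$, where the full-row-rank of $C$ guarantees $\lambda_{\min}(CC^\top)>0$ for the later use of this lemma. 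The only delicate point, and the main obstacle, is the coefficient bookkeeping in the Young step: one must split the cross term with the single parameter $\alpha$ so that the left factor is exactly $1-2\alpha$ and the right factor is exactly $\tfrac1\alpha$ after absorbing the inequality $\norm{d}^2\le 2\norm{C^\top(\A x^{k+1}-b)}^2(1-\gam)^2\beta_k^2+2\beta_k^2\norm{Q(z^{k+1}-z^k)}^2$; the remaining manipulations are routine.
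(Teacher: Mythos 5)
Your proposal is correct and follows essentially the same route as the paper: both start from the optimality condition of Step~2 combined with Step~3 and the KKT relation $0=\nabla g(z^*)+C^\top\lambda^*$, bound the resulting expression by $L_g^2\|z^{k+1}-z^*\|^2$ via Lipschitz continuity of $\nabla g$, and then split off $\|C^\top(\lambda^{k+1}-\lambda^*)\|^2$ with a Young-type inequality in the parameter $\alpha$ before applying the eigenvalue bounds. The paper packages the Young step as the single inequality $\|u+v+w\|^2\ge(1-2\alpha)\|u\|^2-\tfrac1\alpha\|v\|^2-\tfrac1\alpha\|w\|^2$, which is the same computation you carry out by expanding $\|u+d\|^2$ and discarding $\|d\|^2\ge0$.
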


Now, we are ready to state the main convergence theorem.

\begin{theo}\label{linearc}
Suppose $\nabla g$ is Lipschitz continuous with parameter $L_g$ and $C$ has full row rank. Choose $P_k=0$ for all $k$ and assume that Step 1 of Algorithm 1 is well-defined. Suppose $\beta\leq \beta_k\leq \bar{\beta}$ for any $k$, then $\L(x^k,\lambda^*)-\L(x^*,\lambda^*)$, $\ll \A x^{k}-b\ll^2$ and $\ll \lambda^k-\lambda^*\ll^2$ converges to zero R-linearly as $k\to \infty.$
\end{theo}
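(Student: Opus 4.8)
The plan is to build an energy (Lyapunov) function that contracts by a fixed factor at every iteration, which immediately gives R-linear convergence. Reading off the right-hand side of Lemma~\ref{energy1} and using the identity $2-\gamma-\delta/\gamma=(\gamma-1)/\gamma$, I would set
\begin{equation*}
\Psi_k := \frac{1}{2\gamma}\norm{\lambda^*-\lambda^k}^2 + \frac{(\gamma-1)\beta_{k-1}^2}{2\gamma}\norm{\A x^k-b}^2 + \frac{\beta_k^2}{2}\norm{z^*-z^k}^2_{C^\top C+Q} + \frac{\beta_{k-1}^2}{2}\norm{z^k-z^{k-1}}^2_Q,
\end{equation*}
which is nonnegative since $\gamma>1$, $Q\succeq 0$ and $C^\top C+Q\succeq 0$. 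Because $P_k=0$, the left-hand side of Lemma~\ref{energy1} is exactly $\Psi_{k+1}$ plus several nonnegative ``progress'' terms, while its right-hand side equals $\Psi_k-\tfrac{\epsilon\sigma_g\beta_k}{4}\norm{z^*-z^{k+1}}^2$. Rearranging therefore produces a sufficient-decrease inequality in which $\Psi_k-\Psi_{k+1}$ dominates a nonnegative combination of $\L(x^{k+1},\lambda^*)-\L(x^*,\lambda^*)$, $\norm{\A x^{k+1}-b}^2$, $\norm{z^{k+1}-z^k}^2_Q$, $\norm{z^*-z^{k+1}}^2_{C^\top C+Q}$ and $\norm{z^*-z^{k+1}}^2$, each with coefficient proportional to $\beta_k$ or $\beta_k^2$. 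The key structural point is that the feasibility term appearing in the decrease is now indexed at $k+1$, matching the feasibility term of $\Psi_{k+1}$.

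The core of the argument is to prove a one-step contraction $\Psi_{k+1}\leq C_0(\Psi_k-\Psi_{k+1})$ for a constant $C_0>0$ independent of $k$, obtained by bounding each of the four terms of $\Psi_{k+1}$ by the decrease terms above. The $\norm{z^{k+1}-z^k}^2_Q$ term and the $\norm{z^*-z^{k+1}}^2_{C^\top C+Q}$ term are controlled directly by the matching decrease terms, using $\norm{z^*-z^{k+1}}^2_{C^\top C+Q}\leq\lambda_{\max}(C^\top C+Q)\norm{z^*-z^{k+1}}^2$ and $\sigma_g>0$; the feasibility term of $\Psi_{k+1}$ is controlled by the decrease term $\tfrac{\delta\beta_k^2}{2\gamma}\norm{\A x^{k+1}-b}^2$ since the coefficients $(\gamma-1)/\gamma$ and $\delta/\gamma$ differ only by a bounded factor. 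The sole term not visibly present in the decrease is the dual term $\tfrac{1}{2\gamma}\norm{\lambda^*-\lambda^{k+1}}^2$, and this is exactly where Lemma~\ref{wy} is invoked: fixing some $\alpha\in(0,1/2)$ and using that $C$ has full row rank (so $\lambda_{\min}(CC^\top)>0$), Lemma~\ref{wy} bounds $\norm{\lambda^{k+1}-\lambda^*}^2$ by a combination of $\beta_k^2\norm{\A x^{k+1}-b}^2$, $\beta_k^2\norm{z^{k+1}-z^k}^2_Q$ and $L_g^2\norm{z^{k+1}-z^*}^2$, which are precisely the quantities already dominated by the decrease. Throughout, the uniform bounds $\beta\leq\beta_k\leq\bar\beta$ keep every coefficient ratio bounded, so $C_0$ is finite and independent of $k$.

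Combining these bounds yields $\Psi_{k+1}\leq\tfrac{C_0}{1+C_0}\Psi_k$, hence $\Psi_k\to 0$ Q-linearly. I would then transfer this decay to the three quantities in the statement: from the definition of $\Psi_k$ and $\beta_{k-1}\geq\beta$ one has $\norm{\A x^k-b}^2\leq\tfrac{2\gamma}{(\gamma-1)\beta^2}\Psi_k$ and $\norm{\lambda^k-\lambda^*}^2\leq 2\gamma\Psi_k$, while applying the sufficient-decrease inequality at step $k-1$ gives $0\leq\L(x^k,\lambda^*)-\L(x^*,\lambda^*)\leq\tfrac{1}{\beta}\Psi_{k-1}$. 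Since each right-hand side decays Q-linearly, all three quantities converge to zero R-linearly. The main obstacle is precisely the dual term: the energy decrease supplied by Lemma~\ref{energy1} controls only primal feasibility and the $z$-variables, so without an independent bound on $\norm{\lambda^{k+1}-\lambda^*}^2$ no contraction of $\Psi_k$ is available. Lemma~\ref{wy}, which rests on the full row rank of $C$ and the Lipschitz continuity of $\nabla g$, furnishes exactly this missing estimate and is therefore the crux of the proof.
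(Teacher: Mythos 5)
Your proposal is correct and follows essentially the same route as the paper: your energy $\Psi_k$ coincides with the paper's $\E(k)$ (via the identity $2-\gamma-\delta/\gamma=(\gamma-1)/\gamma$), and your one-step contraction $\Psi_{k+1}\leq C_0(\Psi_k-\Psi_{k+1})$ is just a reparametrization of the paper's step of adding $\phi$ times Lemma~\ref{wy} to Lemma~\ref{energy1} and verifying $\E(k+1)\leq M(\phi)\E(k)$ with $M(\phi)<1$, including the same absorption of $L_g^2\norm{z^{k+1}-z^*}^2$ by the $\epsilon\sigma_g\beta_k$ term and the same use of $\lambda_{\min}(CC^\top)>0$. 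Your transfer of the decay to $\L(x^k,\lambda^*)-\L(x^*,\lambda^*)$ via the decrease inequality at step $k-1$ is in fact slightly more explicit than the paper's closing remark.
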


\begin{proof}
Multiply the inequality in 
Lemma~\ref{wy} by $\phi>0$ such that $\phi\leq \epsilon \sigma_g\beta/(4 L_g^2)$ 
and add it to the inequality in Lemma~\ref{energy1}, we get
\begin{align}
&\beta_k\left( \L(x^{k+1},\lambda^*)-\L(x^*,\lambda^*)\right)
+\left( 2-\gamma- \frac{2\phi}{\alpha}\lambda_{\max}(CC^\top)(1-\gamma)^2\right)\frac{\beta_k^2}{2}\norm{\A x^{k+1}-b}^2\notag \\
&+\left( \beta_k^2+\frac{\sigma_g\beta_k}{\lambda_{\max}(Q)}
-\frac{\phi}{\alpha}\lambda_{\max}(Q)\beta_k^2\right)\ll z^{k+1}-z^k\ll^2_Q\notag \\
&+\left( \frac{\beta_{k+1}^2}{2}+\frac{\epsilon \sigma_g\beta_k}{4\lambda_{\max}(C^\top C+Q)}\right)\ll z^*-z^{k+1}\ll^2_{C^\top C+Q}+\left( \frac{1}{2\gamma}+\phi (1-2\alpha)\lambda_{\min}(CC^\top)\right)\ll \lambda^*-\lambda^{k+1}\ll^2\notag \\
&\leq (2-\gamma-\delta/\gamma)\frac{\beta_{k-1}^2}{2}\ll \A x^k-b\ll^2+\frac{\beta_{k-1}^2}{2}\ll z^k-z^{k-1}\ll^2_Q+\frac{\beta_k^2}{2}\ll z^*-z^k\ll^2_{C^\top C+Q}
+\frac{1}{2\gamma}\ll \lambda^*-\lambda^{k}\ll^2.\notag 
\end{align}
Note that in the last inequality, we removed the nonpositive term
$(\phi L_g^2 -\frac{\epsilon\sigma_g\beta}{4}) \ll z^{k+1}-z^*\ll^2$.
By using the fact that $\beta\leq \beta_k\leq \bar{\beta}$ for any $k\geq 0$ in the above inequality, we get
\begin{align}
&\beta_k\left( \L(x^{k+1},\lambda^*)-\L(x^*,\lambda^*)\right)
+\left( 1 + 
\frac{\delta/\gamma - 2\phi \lambda_{\max}(CC^\top)(\gamma-1)^2/\alpha}{2-\gamma-\delta/\gamma}\right)
\frac{(2-\gamma-\delta/\gamma)\beta_k^2}{2}\ll \A x^{k+1}-b\ll^2\notag \\
&+\left( 2+\frac{2\sigma_g}{\lambda_{\max}(Q)\bar{\beta}}
-\frac{2\phi}{\alpha}\lambda_{\max}(Q)\right)\frac{\beta_k^2}{2}\ll z^{k+1}-z^k\ll^2_Q
\notag \\
&
+\left(1+\frac{\epsilon \sigma_g\beta}{2\lambda_{\max}(C^\top C+Q)\bar{\beta}^2}\right)\frac{\beta_{k+1}^2}{2}\ll z^*-z^{k+1}\ll^2_{C^\top C+Q}
+\left(1+ 2\gamma\phi  (1-2\alpha)\lambda_{\min}(CC^\top)\right) \frac{1}{2\gamma}\ll \lambda^*-\lambda^{k+1}\ll^2\notag \\
&\leq
\E(k) :=  \frac{(2-\gamma-\delta/\gamma)\beta_{k-1}^2}{2}\ll \A x^k-b\ll^2+\frac{\beta_{k-1}^2}{2}\ll z^k-z^{k-1}\ll^2_Q+\frac{\beta_k^2}{2}\ll z^*-z^k\ll^2_{C^\top C+Q}
+\frac{1}{2\gamma}\ll \lambda^*-\lambda^{k}\ll^2.
\notag 
\end{align}

Note that from $\delta=1+\gamma-\gamma^2$ and $\gamma>1$, we have $2-\gamma-\delta/\gamma>0.$ If we choose $\phi>0$ to be sufficiently small so that all coefficients in
the parentheses on the left-hand-side are positive, then we have that
\begin{equation}
  \E(k+1)\leq M(\phi)\E(k),
\end{equation}
where 
\begin{align}
&M(\phi):=\max\Bigg\{ \frac{1}{1 +\frac{\delta/\gamma-2\phi \lambda_{\max}(CC^\top)(\gamma-1)^2/\alpha}{2-\gamma-\delta/\gamma}}\,,\ 
\frac{1}{2+\frac{2\sigma_g}{\lambda_{\max}(Q)\bar{\beta}}-\frac{2\phi}{\alpha}\lambda_{\max}(Q)},\ \notag \\
&\hspace{2.6cm} \frac{1}{1+\frac{\epsilon\sigma_g\beta}{2\lambda_{\max}(C^\top C+Q)\bar{\beta}^2}},\ \frac{1}{1+2\gamma\phi(1-2\alpha)\lambda_{\min}(CC^\top)}\Bigg\}.\notag
\end{align}
Note that we have ignored the nonnegative term $\beta_k\left( \L(x^{k+1},\lambda^*)-\L(x^*,\lambda^*)\right).$ It is easy to see that if $\phi>0$ is sufficiently small, then $0<M(\phi)<1$, which implies that $\E(k)\rightarrow 0$ linearly. From the definition of $\E(k)$ and the lower boundedness of $\beta_k$, we can see that $\L(x^k,\lambda^*)-\L(x^*,\lambda^*)$, $\ll \A x^k-b\ll^2$ and $\ll \lambda^k-\lambda^*\ll^2$ all converge R-linearly to zero.

\end{proof}

\section{A partial proximal point method with IADMM for solving non-semi-strongly 
convex problem}\label{Sec-PPPM}

The theoretical analysis in the previous section is based on the semi-strongly convexity of the problem (\ref{eq-prob}). However, in practice, many composite programming problems may not
be semi-strongly convex. In this section, in order to resolve this issue, we introduce
a partial proximal point method to solve \eqref{eq-prob} where in each iteration of the 
algorithm, we solve 
the following perturbed subproblem with a  proximal term inexactly by our IADMM:
\begin{equation}\label{prox-prob}
(y_{k+1},z_{k+1}) \approx \min\left\{ f(y)+g(z)+\frac{\sigma}{2}\| z-z_k\|^2:\ By+Cz=b \right\},
\end{equation}
where $\sigma>0.$ The subproblem (\ref{prox-prob}) is similar to the subproblem of a proximal point method. However, we only add the proximal term to one of the variables. Since the subproblem (\ref{prox-prob}) is not equivalent to problem \eqref{eq-prob}, we will solve a sequence of such subproblems inexactly to obtain a solution of \eqref{eq-prob}.

\begin{algorithm}
	\renewcommand{\algorithmicrequire}{\textbf{Input:}}
	\renewcommand{\algorithmicensure}{\textbf{Output:}}
	\caption{PPPM}
	\label{alg:PPPM}
	\begin{algorithmic}
		\STATE {\bf Initialization}: Choose $(y_1,z_1)\in \R^{n_1}\times \R^{n_2}.$
		\FOR{$k=1,2,\dots$}	
		\STATE 1, Choose parameter $\sigma_k\geq 0$	
		\STATE 2, Compute an approximate KKT solution $(y_{k+1},z_{k+1},\lambda_{k+1})$ of (\ref{prox-prob}) with $\sigma=\sigma_k$ by IADMM.
		\ENDFOR
	\end{algorithmic}  
\end{algorithm}

 We use the following inexact KKT condition to measure the accuracy of the subproblem. 
\begin{equation}\label{iKKTmap} 
\begin{cases}
\epsilon_k^1\in \partial f(y_k)+B^\top \lambda_k \\
\epsilon_k^2\in \partial g(z_k)+C^\top \lambda_k+\sigma_k ( z_k-z_{k-1} )\\\epsilon_k^3=By_k+Cz_k-b
\end{cases}
\end{equation}
where $\(y_k,z_k,\lambda_k\)$ is the outcome of the $k$th subproblem and $\( \epsilon_k^1,\epsilon_k^2,\epsilon_k^3 \)\in \R^{n_1}\times \R^{n_2}\times \R^m$ is the error term. 
Note that we use subscripts to differentiate the iterations in the partial proximal point method and IADMM. 

The convergence analysis of inexact proximal point methods have been a popular research topic because of its wide applications and connection to augmented Lagrangian methods \cite{yang2022bregman,liang2021inexact,cui2019r,rockafellar1976augmented,eckstein2013practical}.
Here we give the convergence theorem of the PPPM to make the paper self-contained.

\begin{theo}\label{conv2}
Suppose $0\leq \sigma_{k-1}\leq \sigma_k,$ $\sum_{k=1}^\infty \( 1+\|y_k\| \)\|\epsilon_k^1\|<\infty,$ $\sum_{k=1}^\infty \( 1+\|z_k\| \)\|\epsilon_k^2\|<\infty$ and $\sum_{k=1}^\infty \( 1+\|\lambda_k\| \)\|\epsilon_k^3\|<\infty,$ then the sequence $(y_k,z_k,\lambda_k)$ generated from Algorithm~\ref{alg:PPPM} satisfies

$$\lim\limits_{k\rightarrow \infty}\max\left\{ \dist(0,\partial f(y_k)+B^\top \lambda_k), \dist(0,\partial g(z_k)+C^\top \lambda_k), \| \A x^k-b \|, |\F(x_k)-\F(x^*)| \right\}=0,$$
where $x^*$ is an optimal solution of (\ref{eq-prob}).
\end{theo}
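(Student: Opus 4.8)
The plan is to recast the inexact scheme as an inexact proximal step for a monotone inclusion, extract Fej\'er monotonicity in the $z$-block, and convert this into the four residual bounds. Write $w_k=(y_k,z_k,\lambda_k)$, let $(x^*,\lambda^*)=(y^*,z^*,\lambda^*)$ be a KKT solution, and let $T$ be the saddle (KKT) operator of $\L$, namely $T(y,z,\lambda)=\big(\partial f(y)+B^\top\lambda,\ \partial g(z)+C^\top\lambda,\ -(By+Cz-b)\big)$, which is maximal monotone with $0\in T(x^*,\lambda^*)$. The inexact system \eqref{iKKTmap} says exactly that $v_k\in T(w_k)$ for $v_k=(\epsilon_k^1,\ \epsilon_k^2-\sigma_k(z_k-z_{k-1}),\ -\epsilon_k^3)$. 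Two of the four residuals are then immediate: $\dist(0,\partial f(y_k)+B^\top\lambda_k)\le\norm{\epsilon_k^1}$ and $\norm{\A x_k-b}=\norm{\epsilon_k^3}$, both tending to $0$ since the summability hypotheses force $\epsilon_k^1,\epsilon_k^3\to0$.

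First I would establish Fej\'er monotonicity in $z$. Applying monotonicity of $T$ to $w_k$ and $(x^*,\lambda^*)$ gives $\inprod{v_k}{w_k-(x^*,\lambda^*)}\ge0$; the $y$- and $\lambda$-contributions cancel against the off-diagonal coupling exactly as in the monotonicity computation for $T$, leaving $\sigma_k\inprod{z_k-z_{k-1}}{z_k-z^*}\le r_k$, where $r_k:=\inprod{\epsilon_k^1}{y_k-y^*}+\inprod{\epsilon_k^2}{z_k-z^*}-\inprod{\epsilon_k^3}{\lambda_k-\lambda^*}$. The three summability hypotheses are tailored precisely so that $\sum_k|r_k|<\infty$ (each term is controlled by $\norm{\epsilon_k^i}$ times $1+\norm{\cdot}$ plus a constant multiple of the same). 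Expanding $\inprod{z_k-z_{k-1}}{z_k-z^*}$ via the three-point identity yields $\norm{z_k-z^*}^2+\norm{z_k-z_{k-1}}^2\le\norm{z_{k-1}-z^*}^2+2r_k/\sigma_k$, and since $\sigma_k\ge\sigma_1>0$ the right-hand perturbation is summable. A quasi-Fej\'er argument then gives that $\{z_k\}$ is bounded, $\norm{z_k-z^*}$ converges, and $\sum_k\norm{z_k-z_{k-1}}^2<\infty$, so $\norm{z_k-z_{k-1}}\to0$.

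The remaining two residuals both reduce to showing $\norm{s_k}\to0$, where $s_k:=\sigma_k(z_{k-1}-z_k)$ is, up to the summable error $\epsilon_k^2$, a subgradient at $z_k$ of the reduced objective $h(z):=g(z)+\inf_y\{f(y):By+Cz=b\}$; indeed the partial proximal step is exactly the ordinary proximal step for $h$, with $C^\top\lambda_k$ supplying a subgradient of the value function. Monotonicity of $\partial h$ across consecutive iterates gives $\inprod{s_k}{s_{k+1}}\ge\norm{s_{k+1}}^2$, hence $\norm{s_k}$ is non-increasing (up to summable perturbations from $\epsilon_k^2$); combined with $\norm{z_k-z_{k-1}}=\norm{s_k}/\sigma_k\to0$ this forces $\norm{s_k}\to0$ when $\{\sigma_k\}$ is bounded, and in general one invokes the descent estimate $\sum_k\norm{s_k}^2/\sigma_k<\infty$ together with $\sum_k 1/\sigma_k=\infty$. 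Granting $\norm{s_k}\to0$, residual (b) follows from $\dist(0,\partial g(z_k)+C^\top\lambda_k)\le\norm{\epsilon_k^2}+\norm{s_k}$. For the objective gap I would sandwich: the lower bound $\F(x_k)-\F(x^*)\ge-\inprod{\lambda^*}{\epsilon_k^3}\to0$ comes from $\L(x_k,\lambda^*)\ge\L(x^*,\lambda^*)$ and $\A x_k-b=\epsilon_k^3$, while the subgradient inequalities for $f$ and $g$ together with $B(y_k-y^*)+C(z_k-z^*)=\epsilon_k^3$ give an upper bound $\F(x_k)-\F(x^*)\le r_k'+\inprod{s_k}{z_k-z^*}$ with $r_k'\to0$; boundedness of $\{z_k\}$ and $\norm{s_k}\to0$ then send the upper bound to $0$.

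The main obstacle is precisely the step controlling $s_k=\sigma_k(z_k-z_{k-1})$. Because we regularize only the $z$-block, the proximal metric is degenerate, so Fej\'er monotonicity and subgradient-norm monotonicity hold only in $z$; converting these into control of the full KKT residual and of the objective is where the real work lies. In particular, showing $\sigma_k\norm{z_k-z_{k-1}}\to0$ when $\sigma_k$ is merely non-decreasing (and possibly unbounded) is delicate: it hinges on the interplay between the summability $\sum_k\norm{s_k}^2/\sigma_k<\infty$ and the growth of $\sigma_k$, and on carrying the inexactness terms $\epsilon_k^2$ through the monotonicity argument without destroying the monotonicity of $\norm{s_k}$.
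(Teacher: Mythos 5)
Your first half is essentially the paper's own argument rewritten in monotone--operator language: the inequality $\sigma_k\inprod{z_k-z_{k-1}}{z_k-z^*}\le r_k$ with $\sum_k|r_k|<\infty$ is exactly the paper's telescoped estimate, and your sandwich for $\F(x_k)-\F(x^*)$ (lower bound from $\L(x_k,\lambda^*)\ge\L(x^*,\lambda^*)$, upper bound from the subgradient inequalities) is the same. Two differences matter, though. The paper applies the subgradient inequalities of $f$ and $g$ directly rather than bare monotonicity of the KKT operator, so the function-value gap stays \emph{inside} the telescoped inequality: it obtains $\F(x_k)-\F(x^*)+\sigma_k\norm{z_{k-1}-z_k}^2/2+\norm{\lambda^*}\norm{\epsilon_k^3}\to 0$ in one stroke, with no need for boundedness of $\{z_k\}$ or a separate quasi-Fej\'er argument. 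And it telescopes $\sigma_{k-1}\norm{z_{k-1}-z^*}^2/2-\sigma_k\norm{z_k-z^*}^2/2$ without ever dividing by $\sigma_k$; your step ``$+\,2r_k/\sigma_k$ is summable'' silently assumes $\sigma_k\ge\sigma_1>0$, which the hypotheses do not grant ($\sigma_k\ge 0$ is allowed to vanish).

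The substantive gap is your route to $\sigma_k\norm{z_k-z_{k-1}}\to 0$, which is what the whole theorem turns on. First, be aware that the monotonicity direction in the theorem statement is a typo: the paper's proof explicitly invokes $\sigma_{k-1}\ge\sigma_k$, and Section 5.3 states that the analysis requires $\sigma_k$ to be non-increasing (this is also what makes the $\sigma$-weighted telescoping run in the right direction). Under that intended hypothesis $\sigma_k\le\sigma_1$, so $\sigma_k^2\norm{z_{k-1}-z_k}^2\le\sigma_1\cdot\sigma_k\norm{z_{k-1}-z_k}^2\to 0$ and you are done immediately --- no reduced objective, no subgradient-norm monotonicity. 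The machinery you build for the non-decreasing, possibly unbounded case does not close: (i) identifying $\sigma_k(z_{k-1}-z_k)-\epsilon_k^2$ with an approximate element of $\partial h(z_k)$ for $h(z)=g(z)+\inf_y\{f(y):By+Cz=b\}$ requires $-C^\top\lambda_k$ to be an (approximate) subgradient of the value function at $z_k$; the errors $\epsilon_k^1$ in the $y$-optimality and $\epsilon_k^3$ in the constraint do not convert into a normed perturbation of that subdifferential in any controlled way, so the inequality $\inprod{s_k}{s_{k+1}}\ge\norm{s_{k+1}}^2$ is not justified; (ii) even granting it, $\sum_k 1/\sigma_k=\infty$ is not implied by the hypotheses for an unbounded non-decreasing $\{\sigma_k\}$ (take $\sigma_k=2^k$). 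So as written the proof is incomplete precisely at the step you flag as the main obstacle; under the paper's intended monotonicity that obstacle disappears and the remaining, patched argument coincides with the paper's.
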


\begin{proof}
Let $(x^*,\lambda^*)$ be a KKT solution of problem \eqref{eq-prob}. Combine (\ref{iKKTmap}) and the convexity of $f$ and $g$, we obtain the following inequalities
\begin{equation}\label{Feb_2_1}
f(y_k)+\< \epsilon_k^1-B^\top \lambda_k,y^*-y_k \>\leq f(y^*),
\end{equation}
\begin{equation}\label{Feb_2_2}
g(z_k)+\< \epsilon_k^2-C^\top \lambda_k-\sigma_k(z_k-z_{k-1}) ,z^*-z_k \>\leq g(z^*).
\end{equation}
Adding (\ref{Feb_2_1}) and (\ref{Feb_2_2}), we have that
\begin{eqnarray}
&& \F(x_k)+\< \epsilon_k^1,y^*-y_k \>+\< \epsilon_k^2,z^*-z_k \>+\< \lambda_k, B(y_k-y^*)+C(z_k-z^*)\> \nonumber \\
&& \leq \F(x^*)+\sigma_k \eta(z^*,z_{k-1},z_k). \label{Feb_2_3}
\end{eqnarray}
Using $By^*+Cz^*=b$ in (\ref{Feb_2_3}) together with Cauchy-Schwarz inequality, we get 
\begin{eqnarray}
\F(x_k)-\F(x^*)+\sigma_k\| z_{k-1}-z_k \|^2/2
&\leq & (\|y^*\|+\|y_k\|)\|\epsilon_k^1\|+ (\|z^*\|+\|z_k\|)\|\epsilon_k^2\|+\|\lambda_k\|\|\epsilon_k^3\|
\nonumber  \\ 
&& +\sigma_k\| z_{k-1}-z^*\|^2/2-\sigma_k\| z_{k}-z^*\|^2/2.
\label{Feb_2_4}
\end{eqnarray}
Moreover, from the convexity of $\L(x,\lambda^*)$ and $0\in \partial_x \L(x^*,\lambda^*)$ we have that $\L(x_k,\lambda^*)\geq \L(x^*,\lambda^*).$ This implies
\begin{equation}\label{Feb_2_5}
\F(x_k)\geq \F(x^*)-\<\lambda^*,\A x_k-b\>\geq \F(x^*)-\|\lambda^*\|\|\A x_k-b\|\geq \F(x^*)-\|\lambda^*\|\|\epsilon_k^3\|.
\end{equation}
Substitute (\ref{Feb_2_5}) into (\ref{Feb_2_4}) and $\sigma_{k-1}\geq \sigma_k$, we get 
\begin{multline}\label{Feb_2_6}
0\leq \F(x_k)-\F(x^*)+\sigma_k\| z_{k-1}-z_k \|^2/2+\|\lambda^*\|\|\epsilon_k^3\|\leq (\|y^*\|+\|y_k\|)\|\epsilon_k^1\|\\
+ (\|z^*\|+\|z_k\|)\|\epsilon_k^2\|+(\|\lambda^*\|+\|\lambda_k\|)\|\epsilon_k^3\|+\sigma_{k-1}\| z_{k-1}-z^*\|^2/2-\sigma_k\| z_{k}-z^*\|^2/2.
\end{multline}
From the condition in the theorem, we have that the sum of right-hand-side of (\ref{Feb_2_6}) is upper bounded. Thus, we get 
\begin{equation}\label{Feb_2_7}
\lim\limits_{k\rightarrow \infty}\F(x_k)-\F(x^*)+\sigma_k\| z_{k-1}-z_k \|^2/2+\|\lambda^*\|\|\epsilon_k^3\|=0.
\end{equation}
Using (\ref{Feb_2_5}) in (\ref{Feb_2_7}), we get 
\begin{equation}\label{Feb_2_8}
\lim\limits_{k\rightarrow \infty}\F(x_k)-\F(x^*)+\|\lambda^*\|\|\epsilon_k^3\|=0,\ \lim\limits_{k\rightarrow \infty}\sigma_k\| z_{k-1}-z_k \|^2=0.
\end{equation}
Because $0\leq \sigma_k\leq \sigma_1,$ we have that $\lim\limits_{k\rightarrow \infty}\sigma_k^2\| z_{k-1}-z_k \|^2=0.$ 
This implies that $\lim\limits_{k\rightarrow \infty}\sigma_k\| z_{k-1}-z_k \|=0.$ Substitute this into (\ref{iKKTmap}), we have that
\begin{equation}\label{Feb_2_9}
\lim\limits_{k\rightarrow \infty}\max\left\{ \dist(0,\partial f(y_k)+B^\top \lambda_k), \dist(0,\partial g(z_k)+C^\top \lambda_k), \| \A x^k-b \| \right\}=0.  
\end{equation}
Because $\lim\limits_{k\rightarrow \infty}\|\lambda^*\|\|\epsilon_k^3\|=0,$ from (\ref{Feb_2_8}), we get 
\begin{equation}\label{Feb_2_10}
\lim\limits_{k\rightarrow \infty}\F(x_k)-\F(x^*)=0.
\end{equation}
Combining (\ref{Feb_2_9}) and (\ref{Feb_2_10}), we get Theorem~\ref{conv2}. 
\end{proof}

Because from Theorem~\ref{iteration}, the sequence generated of IADMM is convergent. We can control the size of $(1+\|y_k\|)\|\epsilon_k^1\|,$ $(1+\|z_k\|)\|\epsilon_k^2\|$ and $(1+\|\lambda_k\|)\|\epsilon_k^3\|$ by solving the subproblem accurately enough. Therefore, the condition in Theorem~\ref{conv2} can be guaranteed.

\section{Practical implementation}\label{Sec-imp}
In this section, we will move on to consider the practical usage of Algorithm~\ref{alg:1} and Algorithm~\ref{alg:PPPM}.

\subsection{Application in LASSO type problems}\label{Subsec-LASSO}
In this subsection, we consider the problem 
\begin{equation}\label{unconstraint}
\min \left\{ f(\lambda)+g(b-A\lambda)\right\},
\end{equation}
where $A\in \R^{m\times n}$, $b\in\R^m$ are given data. 
 We assume that 
$f$ and $g$ are lower semi-continuous and convex, $g$ is differentiable with gradient that is Lipschitz with modulus $L_g$. 
We can rewrite the above problem equivalently as
 $\min\left\{ f(\lambda)+g(\mu):\ A\lambda+\mu=b\right\}.$
The corresponding dual problem is 
\begin{equation*}
\label{dual1}
\min_\lambda\left\{ f^*(A^\top z)+g^*(z)-\< z,b\>\right\},
\end{equation*}
which is equivalent to 
\begin{equation}
\label{dual2}
\min\left\{ f^*(y)+g^*(z)-\< z,b\> \mid \ y-A^\top z=0\right\}.
\end{equation}

The following lemma shows the relation between (\ref{unconstraint}) and (\ref{dual2}).

\begin{lem}\label{pd}
If $(y^*,z^*,\lambda^*)$ is a $KKT$ solution to (\ref{dual2}),
then $\lambda^*$ is an optimal solution to (\ref{unconstraint}).
\end{lem}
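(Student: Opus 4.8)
The plan is to verify directly that $\lambda^*$ satisfies the first-order optimality condition for the convex problem (\ref{unconstraint}), and then invoke convexity to upgrade stationarity to global optimality. Since $f$ is proper, lower semi-continuous and convex and $g$ is in addition differentiable with Lipschitz gradient, the map $\lambda \mapsto f(\lambda)+g(b-A\lambda)$ is convex, and by the Moreau--Rockafellar sum rule together with the chain rule for the smooth composite term, $\lambda^*$ is a global minimizer if and only if
\begin{equation*}
0 \in \partial f(\lambda^*) - A^\top \nabla g(b - A\lambda^*).
\end{equation*}
Thus the whole argument reduces to producing this single inclusion from the KKT system of (\ref{dual2}).

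First I would write out the conditions (\ref{KKTmap}) for (\ref{dual2}). Identifying (\ref{dual2}) with the template (\ref{eq-prob}) — first block objective $f^*(\cdot)$, second block objective $g^*(\cdot)-\langle \cdot, b\rangle$, constraint $A^\top z - y = 0$, multiplier $\lambda$ — the stationarity and feasibility conditions become
\begin{equation*}
\lambda^* \in \partial f^*(y^*), \qquad b - A\lambda^* \in \partial g^*(z^*), \qquad y^* = A^\top z^*.
\end{equation*}
Next I would translate the two conjugate inclusions back to $f$ and $g$ using the Fenchel correspondence $v \in \partial h^*(u) \iff u \in \partial h(v)$, which is valid because $f$ and $g$ are closed proper convex (so $h^{**}=h$). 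The first inclusion becomes $y^* \in \partial f(\lambda^*)$, and the second becomes $z^* \in \partial g(b - A\lambda^*)$; since $g$ is differentiable, the latter sharpens to $z^* = \nabla g(b - A\lambda^*)$. Substituting the feasibility relation $y^* = A^\top z^*$ into the first inclusion then yields $A^\top \nabla g(b-A\lambda^*) \in \partial f(\lambda^*)$, which is exactly the optimality condition displayed above.

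The computational content here is light; the one place that genuinely demands care is the bookkeeping of signs and conventions — specifically the orientation chosen for the dual constraint ($A^\top z - y = 0$ versus $y - A^\top z = 0$) and the corresponding signs of $B^\top\lambda$ and $C^\top\lambda$ in (\ref{KKTmap}). An inconsistent choice flips the sign of the recovered multiplier and would instead identify $-\lambda^*$ as the minimizer, so I would fix the identification $B=-I$, $C=A^\top$ at the outset and track it through. For completeness I would also note that the sum and chain rules invoked to characterize optimality of (\ref{unconstraint}) hold unconditionally, since $g\circ(b-A\,\cdot)$ is finite-valued and continuous everywhere, so no additional constraint qualification is required; and that strong duality is never needed, as the argument is a purely local KKT verification.
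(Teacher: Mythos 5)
Your proposal is correct and follows essentially the same route as the paper's proof: write the KKT conditions of (\ref{dual2}), invert the conjugate subgradient inclusions via $v\in\partial h^*(u)\iff u\in\partial h(v)$, and substitute the feasibility relation $y^*=A^\top z^*$ to obtain $0\in\partial f(\lambda^*)-A^\top\nabla g(b-A\lambda^*)$. Your added care about the sign convention for the constraint and the validity of the sum/chain rule is sound but does not change the argument.
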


\begin{proof}
From the optimality conditions of
\eqref{dual2}, we have $0\in \partial f^*(y^*)-\lambda^*$, 
$0\in \partial g^*(z^*)-b +A\lambda^*$ and $y^*-A^\top z^*=0$. Then $A^\top z^*\in A^\top \partial g(b-A \lambda^*)$, $y^*\in \partial f(\lambda^*)$, from which we deduce $0\in  \partial f(\lambda^*)-A^\top\partial g(b-A\lambda^*)$. Then $\lambda^*$ is an optimal solution to $\min\left\{ f(\lambda)+g(b-A\lambda)\right\},$ so $\lambda^*$ is an optimal solution to $(\ref{unconstraint})$. 
\end{proof}

By Proposition 12.60 of \cite{Via}, we know that $g^*$ is strongly convex with parameter $\sigma_g:=1/L_g.$ Thus, we can use Algorithm 1 to solve problem (\ref{dual2}). To avoid computing the conjugate function, we may use the following identity (see Theorem 14.3 in \cite{monotone})
due to Moreau to solve the subproblem.
\begin{prop}\label{prox}
Let $F$ be a convex lower semi-continuous function. For any $\alpha>0$, we have
$$ {\bf Prox}_{\alpha^{-1}F^*}(x)=x-\frac{1}{\alpha}{\bf Prox}_{\alpha F}(\alpha x).$$
\end{prop}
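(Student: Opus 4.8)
The plan is to prove the identity through the first-order optimality conditions of the two proximal subproblems, relying on the standard subgradient inversion rule for conjugate functions. First I would record the defining optimality condition for $p := {\bf Prox}_{\alpha F}(\alpha x)$, the unique minimizer of $u \mapsto \alpha F(u) + \frac{1}{2}\|u-\alpha x\|^2$:
\[
0 \in \alpha\,\partial F(p) + (p-\alpha x), \quad \text{equivalently} \quad x - \tfrac{1}{\alpha}p \in \partial F(p).
\]
Next I would set $q := x - \frac{1}{\alpha}p$ as a candidate for ${\bf Prox}_{\alpha^{-1}F^*}(x)$ and verify that it satisfies the optimality condition of the second subproblem, namely that $q$ minimizes $u \mapsto \alpha^{-1}F^*(u) + \frac{1}{2}\|u-x\|^2$. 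That condition reads $0 \in \alpha^{-1}\partial F^*(q) + (q-x)$, i.e. $\alpha(x-q) \in \partial F^*(q)$.

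The key tool is the conjugate subgradient inversion rule $w \in \partial F^*(q) \iff q \in \partial F(w)$, valid for any proper lsc convex $F$; applying it turns the condition into $q \in \partial F(\alpha(x-q))$. With the choice $q = x - \frac{1}{\alpha}p$ one has $\alpha(x-q) = p$, so this reduces to $x - \frac{1}{\alpha}p \in \partial F(p)$, which is exactly the optimality condition already established for $p$. Since $\alpha^{-1}F^* + \frac{1}{2}\|\cdot - x\|^2$ is strongly (hence strictly) convex, its minimizer is unique, so the constructed $q$ must coincide with ${\bf Prox}_{\alpha^{-1}F^*}(x)$, which gives the claimed identity.

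The main obstacle here is not computational but a matter of correctly invoking the inversion rule for $\partial F^*$ and keeping the $\alpha$-scalings consistent between the two subproblems; once the optimality conditions are written out, the verification is essentially immediate. An alternative route would be to reduce the statement to the basic Moreau decomposition $v = {\bf Prox}_F(v) + {\bf Prox}_{F^*}(v)$ via a change of variables, but the direct optimality-condition argument above is self-contained and avoids having to establish a separate scaling lemma for the proximal operator.
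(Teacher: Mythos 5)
Your proof is correct. Note, however, that the paper does not actually prove this proposition: it states it as Moreau's identity and cites Theorem 14.3 of an external monograph, so there is no in-paper argument to compare yours against. Your derivation is the standard self-contained route: the optimality condition $x-\frac{1}{\alpha}p\in\partial F(p)$ for $p={\bf Prox}_{\alpha F}(\alpha x)$, the optimality condition $\alpha(x-q)\in\partial F^*(q)$ for the candidate $q$, and the conjugate-subgradient inversion rule $w\in\partial F^*(q)\iff q\in\partial F(w)$ linking the two. The $\alpha$-bookkeeping is consistent ($\alpha(x-q)=p$ exactly when $q=x-\frac{1}{\alpha}p$), and the appeal to strong convexity of $u\mapsto \alpha^{-1}F^*(u)+\frac{1}{2}\|u-x\|^2$ to identify $q$ with the unique minimizer correctly closes the argument. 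The one hypothesis worth making explicit is properness of $F$: it is needed both for the prox to be well defined and for the Fenchel--Moreau biconjugation underlying the inversion rule, and the paper's statement leaves it implicit.
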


With Proposition~\ref{prox}, IADMM for solving (\ref{dual2}) is presented as follows.
\begin{algorithm}
	\renewcommand{\algorithmicrequire}{\textbf{Input:}}
	\renewcommand{\algorithmicensure}{\textbf{Output:}}
	\caption{IADMM for solving (\ref{unconstraint})}
	\label{alg:3}
	\begin{algorithmic}
		
		\STATE {\bf Initialization}: Given $ (x^1,\lambda^1)$ and constants $\gamma\in \big(1,\frac{1+\sqrt{5}}{2}\big)$, $\epsilon, \beta, \tau\in (0,1)$, choose the initial penalty parameter $\beta_1\in [\beta,\infty)$ and positive semidefinite matrix $Q$ such that 
		$AA^\top + Q \succ 0$. Set $P_k=0$, $Q_k=\beta_k Q$.
		\FOR{$k=1,2,\dots$}		
		\STATE 1. $y^{k+1}=A^\top z^k-\frac{\lambda^k}{\beta_k}-\frac{1}{\beta_k}\prox1_{\beta_k f}\left( \beta_k A^\top z^k-\lambda^k\right)$
		\STATE 2. $z^{k+1}= \arg\min_z\left\{ g^*(z)-\< z,b\>+\< \lambda^k,-A^\top z\>+\frac{\beta_k}{2}\ll y^{k+1}-A^\top z\ll^2+\frac{1}{2}\ll z-z^k\ll^2_{Q_k}\right\}$
		\STATE 3. $\lambda^{k+1}=\lambda^{k}+\gamma\beta_k({y^{k+1}-A^\top z^{k+1}})$
		\STATE 4. Choose $\beta_{k+1}\in \left[\max\{\beta,\tau\beta_k\},\ \sqrt{\beta_k^2
		+\frac{(1-\epsilon)\sigma_g\beta_k}{\lambda_{\max}(C^\top C+Q)}}\,\right]$
		\ENDFOR
	\end{algorithmic}  
\end{algorithm}

Note that to make subproblem in Step 2 of Algorithm \ref{alg:3} easier to solve, 
we may choose $Q:= \lambda_{\max}(AA^\top) I-AA^\top$. Then Step 2 becomes $$ z^{k+1}=a^k-\frac{1}{\beta_k\lambda_{\max}(AA^\top)}\prox1_{\beta_k\lambda_{\max}(AA^\top)g}\left(\beta_k\lambda_{\max}(AA^\top) a^k\right),$$ where $$ a^k=\frac{b+A\lambda^k+\beta_k\left( \lambda_{\max}(AA^\top) I-AA^\top\right) z^k+\beta_k A y^{k+1}}{\beta_{k} \lambda_{\max}(AA^\top)}.$$

\subsection{Strategies for updating $\beta_k$}
In this subsection, we consider how to update the penalty parameter $\beta_k$. Note that if we fix $\beta_k$ as a constant, then IADMM is equivalent to the traditional ADMM. Also, Xu's accelerated ADMM in \cite{XYY} is essentially IADMM with the following monotone updating strategy:
\begin{equation}\label{accstr}
\beta_1=\beta,\ \beta_{k+1}=\sqrt{\beta_k^2+\frac{(1-\epsilon)\sigma_g\beta_k}{\lambda_{\max}(C^\top C+Q)}},\forall k\in \mathbb{N}^+.
\end{equation}
It is called accelerated ADMM because the function value gap and primal
feasibility achieve the ergodic convergence rate of $O(1/n^2)$ as shown in Section~\ref{Sec-conv}. But as we will see later in the numerical experiments, a better strategy 
is to adaptively adjust the penalty parameter $\beta_k$ in IADMM based on 
 the ratio between the normalised primal feasibility $R_p^{k+1}$ and normalised dual feasibility $R_d^{k+1}$ for the computed iterate $(y^{k+1},z^{k+1},\lambda^{k+1})$, where 
\begin{eqnarray*}
&R_p^{k+1}:=\frac{\|By^{k+1}+Cz^{k+1}-b\|}{\max\left\{\|By^{k+1}\|,\|Cz^{k+1}
\|,\|b\|\right\}},\\[5pt]
&R_d^{k+1}:=\max\left\{ \frac{\| y^{k+1}-\prox1_f(y^{k+1}-B^\top \lambda^{k+1})\|}{\max\left\{ \|y^{k+1}\|,\|B^\top \lambda^{k+1}\|\right\}},\frac{\| z^{k+1}-\prox1_g(z^{k+1}-C^\top \lambda^{k+1})\|}{\max\left\{\|z^{k+1}\|,\|C^\top\lambda^{k+1}\|\right\}} \right\}.
\end{eqnarray*}
The KKT residue of the computed $(y^{k+1},z^{k+1},\lambda^{k+1})$ is defined to be $\max\{R_p^{k+1},R_d^{k+1}\}$. Our adaptive strategy is that after iteration $k$, we check the ratio between $R_p^{k+1}$ and $R_d^{k+1}$, and  update $\beta_k$ as follows:
\begin{equation}\label{adapstr}
\beta_{k+1}:= \begin{cases} \sqrt{\beta_k^2+\frac{(1-\epsilon)\sigma_g\beta_k}{\lambda_{\max}(C^\top C+Q)}} & R_p^{k+1}>R_d^{k+1},\\[5pt]
\max\left\{ \beta, \beta_k/1.5\right\} & R_p^{k+1}<R_d^{k+1}/10,\\
\beta_k & R_d^{k+1}/10 \leq R_p^{k+1}\leq R_d^{k+1}.
\end{cases}
\end{equation}

It is easy to see that this strategy belongs to the framework of Algorithm 1. Note that updating the penalty according to the ratio between primal and dual feasibility is popular in the literature (see \cite{He-adapt, xu2017adaptive2}). However, our adaptive strategy has guaranteed
convergence. The adaptive strategy for IADMM is not necessarily restricted to the one presented in (\ref{adapstr}). One can choose any other adaptive strategy, and as long as the new penalty parameter lies in the interval in Step 4 of Algorithm 1, the convergence is guaranteed. In practice, we choose  $\gamma=1.618$ and $\epsilon=10^{-4},$ $\beta=10^{-6}$. 

\subsection{Strategies for partial proximal point method}\label{Subsec-PPPM}
In this section, we consider the implementation of Algorithm~\ref{alg:PPPM}. Because the convergence analysis in Theorem~\ref{conv2} requires $\sigma_k$ to be monotonically decreasing, we will choose $\sigma_k=\max\{1/2^k,10^{-6}\}.$ A geometrically decreasing proximal parameter usually reduces the number of outer iterations in the partial proximal point method. Let $\widehat{R}_p^k,$ $\widehat{R}_d^k$ be the primal and dual KKT residue of the $k$th subproblem (\ref{prox-prob}). Let $R_p^k,$ $R_d^k$ be the KKT residue of the original problem (\ref{eq-prob}). Because problems (\ref{eq-prob}) and (\ref{prox-prob}) have the same constraints, we get $\widehat{R}_p^k=R_p^k.$ Also, when the subproblem (\ref{prox-prob}) is solved exactly, we will have that $R_p^k=\widehat{R}_p^k=\widehat{R}_d^k=0$ but $R_d^k$ may not equal to zero. This is because problem (\ref{eq-prob}) is generally not equivalent to subproblem (\ref{prox-prob}). 
We stop the IADMM for solving the subproblem when the following conditions
are satisfied:
\begin{equation}\label{st-cri}
R_p^k<R_d^k/10,\ \max\left\{\widehat{R}_p^k,\widehat{R}_d^k\right\}<1/(10k^3).
\end{equation}
The first condition in (\ref{st-cri}) uses the relation between the primal and dual KKT residues. It has been used in augment Lagrangian method \cite{eckstein2013practical}. The second condition in (\ref{st-cri}) ensures that the KKT residues of the subproblems tend to zero rapidly with the rate $O(1/k^3).$ It corresponds to the condition of the error term in Theorem~\ref{conv2}. 

We should add that the initial penalty parameter $\beta_1$ used by the IADMM 
to solve the $k$-th PPPM subproblem in Algorithm 2 is adaptively adjusted as follows.
In detail, if the penalty parameter of the previous IADMM inner loop keeps increasing in the last few iterations, it is likely to be too small and will continue to increase in the new IADMM inner loop. In this case, we set the initial penalty parameter to be $\eta$ times the last penalty parameter of the previous IADMM inner loop for some constant $\eta>1.$ Otherwise, we simply choose the initial penalty parameter to be the same as the last penalty parameter of the previous IADMM inner loop. By doing so, Algorithm~\ref{alg:PPPM} can increase the 
penalty parameter drastically between different outer iterations and refine it by the scheme in (\ref{adapstr}) at each inner loop. From our numerical experiment, Algorithm~\ref{alg:PPPM} can always identify a good penalty parameter after only a few outer iterations and becomes stable after that. 

\section{Numerical experiments}\label{Sec-numer}

In this section, we apply our algorithms to different problems to demonstrate the robustness, convergence results and efficiency. Although in our theoretical analysis, the proximal term of $y$ can be nonzero, we simply choose $P=0$ in the following numerical experiments. This is because the ADMM subproblem for $y$ already has a closed form solution and we don't have to add
a proximal term to simplify it. All the experiments are run using {\sc Matlab} R2021b on a Workstation with a Intel(R) Xeon(R) CPU E5-2680 v3 @ 2.50GHz Processor and 128GB RAM. 

\subsection{Testing the robustness of IADMM}

In this section, we test several regression problems to verify the robustness of adaptive IADMM.

\begin{exmp}
{\bf Total variation regularized least squares problem.}
\end{exmp}
We consider following problem, which is considered in example 6.3 of \cite{Kimacc}:
\begin{equation}\label{TV}
\min\left\{ \|Hx-b\|^2/2+\gamma \sum_{i=1}^{n-1}|x_{i+1}-x_i|:\ x\in \R^n\right\},
\end{equation}
where $H\in \R^{r\times n}$, $b\in \R^r$. To solve (\ref{TV}), we apply IADMM to its dual problem as in Subsection~\ref{Subsec-LASSO}. In this case, $f(x)=\gamma \sum_{i=1}^{n-1}|x_{i+1}-x_i|$ and $g(x) = \|x\|^2/2.$ To compute the proximal mapping of $f$, we use Condat's direct algorithm in \cite{condat}. We use synthetic data similar to example 6.3 in \cite{Kimacc}. We randomly generate the matrix $H$ as \texttt{H = randn(r,n)/sqrt(n)} and $b$ as \texttt{b = H*x\_true+nf*randn(r,1)}, where \texttt{x\_true} is a randomly generated vector such that $\left(  x_2-x_1,x_3-x_2, \ldots ,x_n-x_{n-1}\right)$ only has a few non-zero elements. We choose the noise  factor \texttt{nf = 1/norm(H,'fro')}.
\begin{exmp}
{\bf Elastic net regularized support vector machine}
\end{exmp}
We consider the following problem which is considered in section 3.3 of \cite{XYY}:
\begin{equation}\label{SVM}
\min\left\{\frac{1}{m}e^\top [y]_++\mu_1 \|x\|_1+\frac{\mu_2}{2}\|x\|^2:\ Bx+y=e,\ x\in \R^p,\ y\in \R^m\right\},
\end{equation}
where $B\in \R^{m\times p}$ and $e\in\R^m$ is the vector of all ones. We choose $\mu_1=\mu_2=0.01$ and generate synthetic data matrix $B$ in the same way as section 3.3 of \cite{XYY} (also see section 3.1.1 of \cite{XYYela}).

\begin{exmp}
{\bf Elastic net regularized problem with square-root loss}
\end{exmp}
We consider the following problem which is considered in section 5.2. of \cite{QTD2}:
\begin{equation}\label{L2}
\min\left\{ \|x\|_2+\mu_1\|y\|_1+\frac{\mu_2}{2}\|y\|^2:\ -x+By=c,\ x\in \R^n,\ y\in \R^p\right\},
\end{equation}
where $B\in \R^{n\times p}$ and $c\in \R^n.$ We choose $\mu_1=0.01,$ $\mu_2=0.1$ and generate synthetic data matrix $B,$ $c$ in the same way as section 5.2 of \cite{QTD2}.

For all the above problems, we test three algorithms: traditional ADMM with fixed-penalty parameter, IADMM with adaptive strategy (\ref{adapstr}) and ADMM with a heuristic adaptive strategy (which we denote as Heu-ADMM) as follows:
\begin{equation}\label{adapheu}
\beta_{k+1}:= \begin{cases} 1.5\beta_k & R_p^{k+1}>10R_d^{k+1},
\\[5pt]
\beta_k/1.5 & R_p^{k+1}<R_d^{k+1}/10, \\
\beta_k & R_d^{k+1}/10\leq R_p^{k+1}\leq 10R_d^{k+1}.
\end{cases}
\end{equation}

We choose the initial penalty parameter to be from $10^{-5}$ to $10^{5}$. We stop the algorithms when $\max\{R_p^{k+1},R_d^{k+1}\}<10^{-5}$ or when the maximum iteration number is reached.

\begin{figure}
		\begin{minipage}{1\linewidth}
		\vspace{3pt}
		\centerline{\includegraphics[height=15cm,width=18cm]{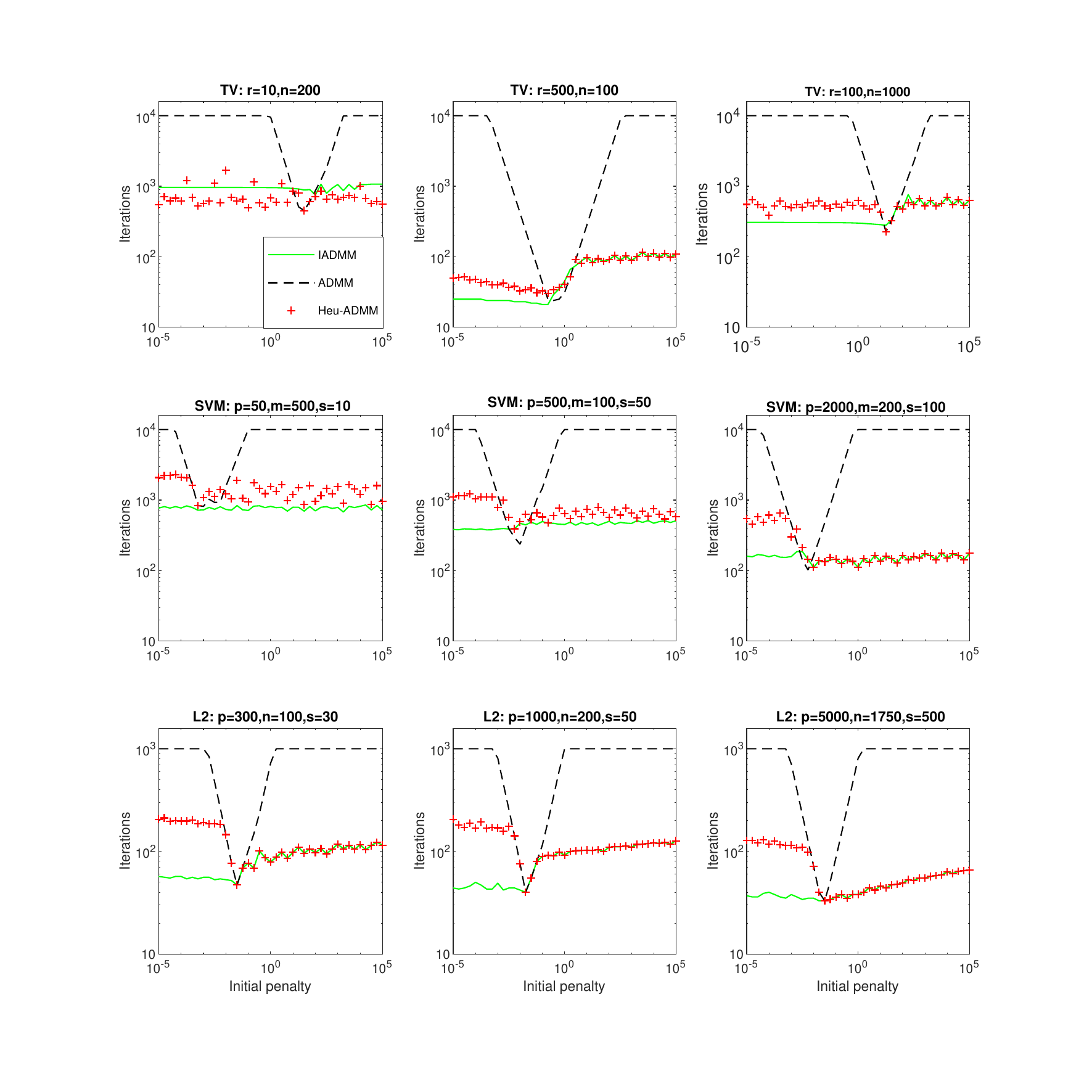}}
		\vspace{-10mm}
	\end{minipage}
	\caption{Results for Example 5.1, 5.2 and 5.3. ``IADMM" is adaptive ADMM with strategy (\ref{adapstr}), ``ADMM" is ADMM with a fixed-penalty parameter, ``Heu-ADMM" is adaptive ADMM with strategy (\ref{adapheu}). $``{\bf s}"$ is a parameter for the generating data, readers may refer to \cite{QTD2,XYY}.
}
	\label{fig1}
\end{figure}

The results of this experiment are shown in Figure~\ref{fig1}. The $x-$axis is the initial penalty parameter and the $y-$axis is the number of iterations that the algorithm needs to achieve the accuracy of $10^{-5}.$  From the plots in Figure \ref{fig1}, we can see that the fixed parameter ADMM is very fast once the penalty parameter is optimally tuned. However, its speed is very sensitive to the initial penalty parameter. However, IADMM and Heu-ADMM can solve all the problems efficiently, without being affected seriously by the initial parameter, because they can adaptively tune their penalty parameters. Apart from the convergence guarantee, our adaptive strategy (\ref{adapstr}) is also more stable than the heuristic strategy (\ref{adapheu}). Moreover, when the initial penalty parameter is too small, IADMM is usually faster than Heu-ADMM. One possible reason is that in the strategy (\ref{adapstr}), $\beta_k$ has a super-exponential growth when it is too small. 

\subsection{Verifying the convergence rate of IADMM}

In this section, we consider the following dense convex quadratic program, which is considered in section 5.1 of \cite{QTD2}.

\begin{exmp}
{\bf Dense convex quadratic programs}
\end{exmp}
\begin{equation}\label{QP}
\min\left\{ \frac{1}{2}y^\top H y+h^\top y:\ y\in \R^m,\ a\leq By\leq b\right\},
\end{equation}
where $a,b\in \R^n,$ $h\in \R^m,$ $B\in \R^{n\times m}$ and $H\in \mathbb{S}^m$ is a positive definite matrix. After introducing another variable $z\in \R^n$, (\ref{QP}) becomes:
\begin{equation}\label{QP1}
\min\left\{ \frac{1}{2}y^\top Hy+h^\top y+\delta_{[a,b]}\left(z\right):\  By-z=0,\; y\in \R^m\right\},
\end{equation}
where $\delta_{[a,b]}(.)$ is the indicator function of the 
set $\left\{ x\in \R^n:\ a\leq x\leq b\right\}.$
Since $\frac{1}{2}y^\top H y + h^\top y$ is strongly convex, we can use IADMM to solve (\ref{QP1}). We generate data in the same way as section 5.1 of \cite{QTD2}. We choose $n=m=2000$ and the smallest eigenvalue of $H$ is 1, $0.5$, $0.1$ for different choices of $H$. Moreover, we choose $B$ as a matrix with full row rank. Therefore, from Theorem~\ref{linearc}, IADMM has linear convergence as long as the penalty parameters are bounded.

We will compare IADMM with Tran-Dinh and Zhu's accelerated ADMM in section 4.4 of \cite{QTD}, Kim's accelerated ADMM in section 6.4 of \cite{Kimacc} and Xu's accelerated ADMM in \cite{XYY}. We call them ``acc1-ADMM", ``acc2-ADMM", ``acc3-ADMM", respectively. For acc1-ADMM, we update the
parameters as suggested in \cite{QTD}\footnote{The parameters used in acc1-ADMM are quite different from the traditional ADMM, so we omit the details here. Readers may refer to \cite{QTD} (31) case 2 and section 5.1 case 2  for details.}. For acc2-ADMM, we choose the 
initial penalty parameter to be ${25}/({2\|B\|_2^2})$, which is nearly optimally-tuned. For acc3-ADMM, we choose $\beta_1=\beta={1}/({2\|B\|_2^2})$ and use the strategy 
presented in (\ref{accstr}) to update $\beta_k.$ For IADMM, we choose $\beta_1=\beta={1}/({2\|B\|_2^2})$ and use the strategy in (\ref{adapstr}) to update $\beta_k.$

\begin{figure}
		\begin{minipage}{1\linewidth}
		\vspace{3pt}
		\centerline{\includegraphics[height=15cm,width=20cm]{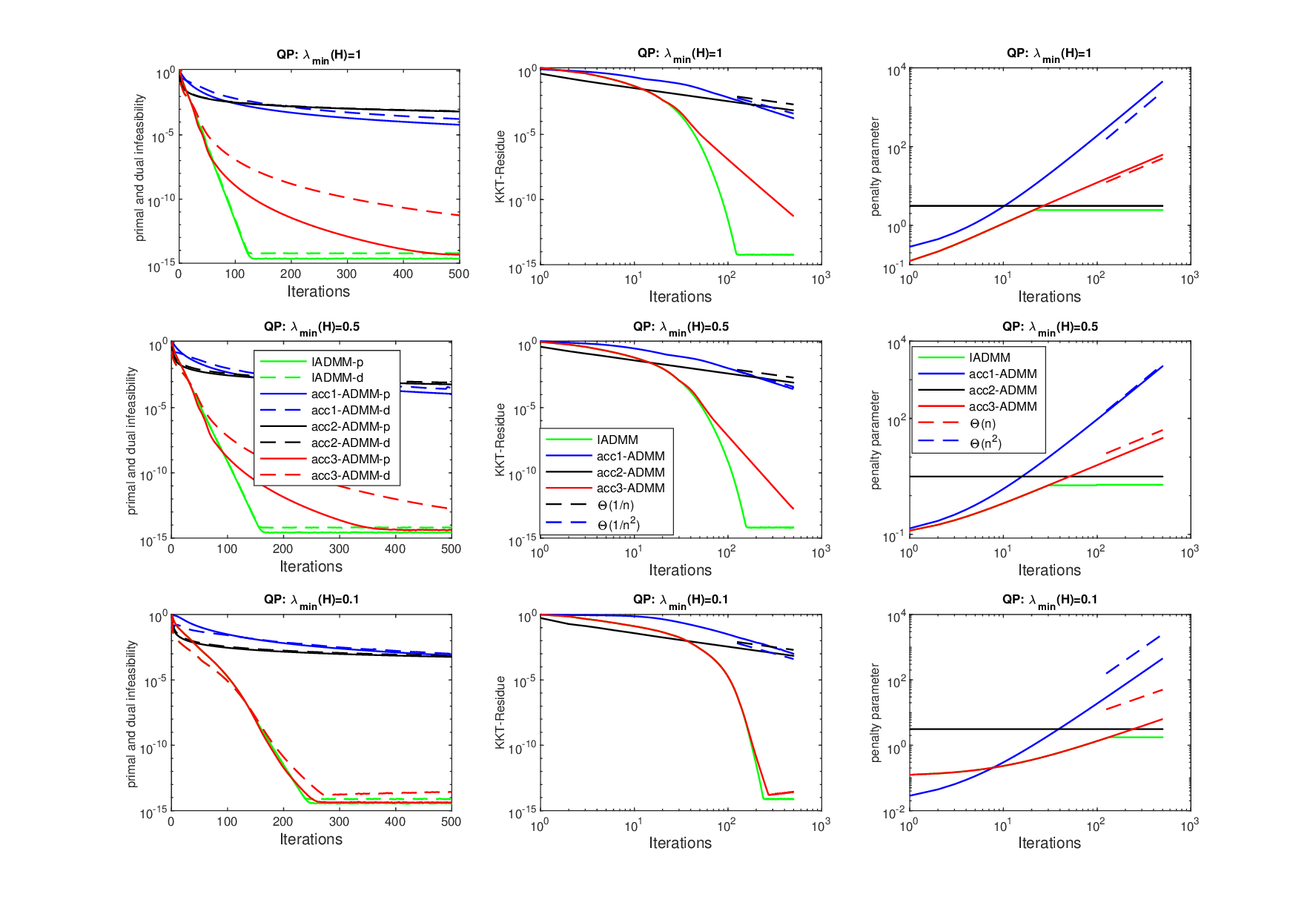}}
          \vspace{-10mm}
	\end{minipage}
	
	\caption{Results for the quadratic program \eqref{QP1} with different $H$. In the left panel, the curves 
      {\tt ***}-p and {\tt ***}-d correspond to the primal feasibility and dual feasibility, respectively.
}
	\label{fig2}
\end{figure}

The results of this experiment are shown in Figure~\ref{fig2}. The first column of plots is on the primal and dual infeasibility. We use the matlab function \texttt{semilogy} to visualize the linear convergence of IADMM. The second column of plots is on the KKT-residue. We use the matlab function \texttt{loglog} to show the sub-linear convergence rate of different types of ADMM algorithms. The third column of plots is on the penalty parameters. We use \texttt{loglog} to show the growth rate of the penalty parameters. From the plots of Figure~\ref{fig2}, we can see that IADMM is the most efficient
algorithm. The convergence rate of acc2-ADMM is close to $O(1/n)$, which is consistent with the theory in \cite{Kimacc}. This example also implies that the convergence rate of $O(1/n)$ for acc2-ADMM cannot be improved even if one part of the objective function is strongly convex. We also see that the convergence rate of acc1-ADMM is close to $O(1/n^2),$ which verifies the theory in \cite{QTD}. For acc3-ADMM, its convergence rate is 
surprisingly much faster
than the ergodic convergence rate of $O(1/n^2)$ proved in Section~\ref{Sec-conv}. 
 IADMM is the only algorithm which has linear convergence. This is consistent with Theorem~\ref{linearc} because from the third plot, the penalty parameter for IADMM is bounded. However, since the penalty parameter for acc3-ADMM tends to infinity, the condition of Theorem~\ref{linearc} is not satisfied and acc3-ADMM does not exhibit linear convergence. Moreover, it is easy to see that when the penalty parameter is too large, the primal feasibility of acc3-ADMM is much smaller than the dual feasibility of acc3-ADMM\footnote{Note that the acc1-ADMM is quite different from the traditional ADMM. Its primal and dual feasibility is close to each other even if its penalty parameter increases rapidly.}. This is because the penalty on the dual feasibility becomes weaker with 
a larger $\beta_k$. That's why we had better choose the penalty parameter adaptively.

We have also tested \eqref{QP1} on other random instances, but the 
behaviours of acc1-ADMM, acc2-ADMM, acc3-ADMM and IADMM are similar
to the ones presented in Figure \ref{fig2}.

\subsection{Testing the efficiency of partial proximal point method for solving non-semi-strongly
convex problems}
In this section, we verify the efficiency of Algorithm~\ref{alg:PPPM}. We consider the following two examples.

\begin{exmp}
{\bf Standard convex quadratic programming}
\begin{equation}\label{STCVXQP}
\min\left\{ \frac{1}{2}x^\top Qx+c^\top x:\ Ax=b,\ l\leq x\leq u,\ x\in \R^n\right\},
\end{equation}
where $Q\in \mathbb{S}^n_+,$ $c\in \R^n,$ $A\in \R^{m\times n},$ $b\in \R^m,$ $l,u\in \(\R\cup\{\pm \infty\}\)^n.$ 
\end{exmp}
Problem (\ref{STCVXQP}) is the standard form of a convex quadratic programming problem. We 
reformulate problem (\ref{STCVXQP}) into the following problem:
\begin{equation}\label{STCVXQP}
\min\left\{ \frac{1}{2}z^\top Qz+c^\top z+\delta_{[l,u]}(z)+\delta_\Omega(y):\ y=z,\  y,z\in \R^n\right\},
\end{equation}
where $\Omega = \{ y\in \R^n\mid Ay=b\}.$
The above problem can be solved by ADMM-type algorithms. Because the matrix $Q$ may not be positive definite, we use the partial proximal point method to solve it. For the IADMM subproblem with respect to $z,$ we use the semi-smooth Newton method to solve its dual problem. In detail, after some simplification, we subproblem with respect to $z$ can be written as 

\begin{equation}\label{SSNQP}
\min\left\{ z^\top Q z/2+\beta \|z-\hat{z}\|^2/2+\delta_{[l,u]}(z):\ z\in \R^n \right\},
\end{equation}
for some $\beta>0$ and $\hat{z}\in \R^n.$ Without loss of generality, we assume that $Q$ is positive definite. This is because otherwise we can reformulate the above problem as
\begin{equation}\label{SSNQP1}
\min\left\{ z^\top \(Q+\epsilon I\) z/2+\(\beta-\epsilon\) \|z-\beta\hat{z}/(\beta-\epsilon)\|^2/2+\delta_{[l,u]}(z):\ z\in \R^n \right\}
\end{equation}
for some small constant $\epsilon>0.$ Problem (\ref{SSNQP}) and (\ref{SSNQP1}) only differ by a constant. Since $Q$ is positive definite, we can apply Cholesky factorization to decompose it as $Q:=LL^\top$ for some lower triangular matrix $L$. Note that we only have to do this once at the beginning because $Q$ is a constant matrix. Thus, problem (\ref{SSNQP}) can be further written as
\begin{equation}\label{SSNQP2}
\min\left\{ \|L^\top z\|^2/2+\beta \|z-\hat{z}\|^2/2+\delta_{[l,u]}(z):\ z\in \R^n \right\}.
\end{equation}
After introducing another variable $y=L^\top z,$ problem (\ref{SSNQP2}) becomes
\begin{equation}\label{SSNQP3}
\min\left\{ \|y\|^2/2+\beta \|z-\hat{z}\|^2/2+\delta_{[l,u]}(z):\ y=L^\top z,\ y\in \R^n,\ z\in \R^n \right\},
\end{equation}
whose dual problem is
\begin{equation}\label{SSNQPD}
\min\left\{ \beta\( \|\hat{z}-L\lambda/\beta\|^2-\dist\([l,u],\hat{z}-L\lambda/\beta\)^2 \)/2+\|\lambda\|^2/2:\ \lambda\in \R^n \right\},
\end{equation}
which is an unconstraint optimization problem with the objective function being strongly convex and Lipschitz continuous differentiable. One can use semi-smooth newton method to solve (\ref{SSNQPD}) efficiently. In our experiment, we set the tolerance of the gradient norm to be less than $10^{-10}$ to terminate the semi-smooth Newton method.

We compare Algorithm~\ref{alg:PPPM} with the barrier method of  Gurobi \cite{gurobi} 9.5.2  with 2 threads on the modified Maros-M\'esz\'aros benchmark dataset \cite{maros1999repository}. Because the matrix $Q$ of some of the instances in \cite{maros1999repository} is diagonal, 
Gurobi can solve such problems very efficiently by making use of this highly special structure. In order to make the problems more challenging, we add a small perturbation to $Q$ whenever it is diagonal as follows:
\begin{center}
\texttt{B=randn(n,n); BB = B*B'; Q = Q+1e-6*norm(Q,'fro')*BB/norm(BB,'fro').}
\end{center}
We use ``*'' to indicate the perturbed instances in the following table. After the random perturbation, $Q$ becomes a dense matrix. We only test the problems in \cite{maros1999repository} with standard form and $1000\leq n\leq 10000.$ When $n$ is greater than $10000,$ the dense Cholesky factorization is too expensive for both Algorithm~\ref{alg:PPPM} and Gurobi. We use the following standard KKT residue for convex QP problem to measure the accuracy:
\begin{equation}\label{QPKKT}
{\rm Resp}:=\frac{\| Ax-b\|}{1+\|b\|},\ {\rm Resd}:=\frac{\|{\rm Proj}_{[l,u]}\(x-\(Qx+c-A^\top \lambda\)\)-x\|}{1+\|Qx+c\|}
\end{equation}

 Since Gurobi use a different stopping criterion instead of KKT residue, we first use Gurobi to solve one problem with tolerance $10^{-5}$ to get the outcome. Then we compute the KKT residue i.e., $\max\{{\rm Resp},{\rm Resd}\}$ of the outcome, say $\alpha$, and set the tolerance of PPPM to be $\min\{10^{-5},\alpha\}.$ By doing so, the outcome of our algorithm will be more accurate than Gurobi and the comparison of running time will be fair.

\begin{center}
\begin{footnotesize}
\begin{longtable}{|c|c|cccc|l|}
\caption{Comparison of PPPM(IADMM), and Gurobi for modified 
Maros-M\'esz\'aros convex QP problem; $(n,m)$ denotes the number of variables and affine constraints.}
\label{MMQP}
\\
\hline
Problem & Algorithm & Resp & Resd & Fval & Time [s] \\ \hline
\endhead
 AUG3DCQP* & PPPM & 9.61e-06 & 1.81e-06 & -9.4314271e+02& 4.06e+00 \\
(3873,1000) & Gurobi & 4.64e-11 & 4.64e-04 & -9.4313492e+02& 9.93e+01 \\
 \hline 
 AUG3DQP* & PPPM & 9.38e-06 & 4.06e-06 & -6.6126864e+02& 1.44e+01 \\
 (3873,1000) & Gurobi & 1.82e-14 & 1.31e-03 & -6.6125924e+02& 9.95e+01 \\
 \hline 
  CONT-050* & PPPM & 7.79e-07 & 8.39e-07 & -4.5638482e+00& 2.77e+02 \\
 (2597,2401) & Gurobi & 2.45e-12 & 8.51e-07 & -4.5638481e+00& 2.41e+01 \\
 \hline 
 CVXQP1\_L & PPPM & 4.58e-09 & 6.78e-09 & 1.0870480e+08& 4.71e+00 \\
 (10000,5000) & Gurobi & 7.37e-09 & 4.40e-09 & 1.0870481e+08& 1.56e+01 \\
 \hline 
  CVXQP1\_M & PPPM & 1.95e-08 & 6.07e-08 & 1.0875116e+06& 1.51e-01 \\
 (1000,500) & Gurobi & 6.90e-08 & 2.90e-08 & 1.0875127e+06& 1.88e-01 \\
 \hline 
 CVXQP2\_L & PPPM & 1.53e-09 & 8.70e-09 & 8.1842458e+07& 5.50e+00 \\
 (10000,2500) & Gurobi & 2.85e-12 & 8.90e-09 & 8.1842458e+07& 7.12e+00 \\
 \hline 
  CVXQP2\_M & PPPM & 2.42e-08 & 1.22e-07 & 8.2015543e+05& 2.16e-01 \\
 (1000,250) & Gurobi & 1.85e-10 & 1.24e-07 & 8.2015543e+05& 1.01e-01 \\
 \hline 
  CVXQP3\_L & PPPM & 9.90e-10 & 8.38e-09 & 1.1571110e+08& 6.25e+00 \\
 (10000,7500) & Gurobi & 8.45e-09 & 5.54e-09 & 1.1571112e+08& 2.03e+01 \\
 \hline 
  CVXQP3\_M & PPPM & 7.54e-08 & 2.04e-07 & 1.3628288e+06& 2.19e+00 \\
 (1000,750) & Gurobi & 2.18e-07 & 4.41e-08 & 1.3628301e+06& 2.60e-01 \\
 \hline 
  HUES-MOD* & PPPM & 8.98e-06 & 2.81e-06 & 3.4824562e+07& 3.01e+01 \\
 (10000,2) & Gurobi & 4.00e-15 & 2.59e-05 & 3.4824489e+07& 1.27e+03 \\
 \hline 
  HUESTIS* & PPPM & 4.38e-08 & 1.38e-08 & 3.4824489e+11& 3.34e+01 \\
 (10000,2) & Gurobi & 3.66e-15 & 5.53e-08 & 3.4824489e+11& 1.74e+03 \\
 \hline 
  QSCSD6 & PPPM & 9.95e-06 & 8.17e-06 & 5.0808222e+01& 1.00e+00 \\
 (1350,147) & Gurobi & 2.99e-13 & 1.08e-05 & 5.0808253e+01& 1.48e-02 \\
 \hline 
  QSCSD8 & PPPM & 9.38e-06 & 6.29e-06 & 9.4076384e+02& 1.11e+00 \\
 (2750,397) & Gurobi & 2.68e-14 & 3.30e-05 & 9.4076594e+02& 2.40e-02 \\
 \hline 
  STCQP1 & PPPM & 7.14e-07 & 1.59e-06 & 1.5514388e+05& 1.76e+00 \\
 (4097,939) & Gurobi & 3.70e-11 & 1.60e-06 & 1.5514356e+05& 1.73e-01 \\
 \hline 
  STCQP2 & PPPM & 1.11e-06 & 9.89e-06 & 2.2327448e+04& 6.63e-01 \\
 (4097,2052) & Gurobi & 3.57e-13 & 1.69e-05 & 2.2327319e+04& 3.27e-01 \\
 \hline 
 \end{longtable}
\end{footnotesize}
\end{center}

From Table~\ref{MMQP}, we can see that Algorithm~\ref{alg:PPPM} can solve all the instances to the required accuracy. Gurobi does not reach the tolerance for some instances because it uses a different stopping criterion. Algorithm~\ref{alg:PPPM} is faster than Gurobi for more than half of the instances. 
This shows that when $Q$ is not well structured, our algorithm (PPPM with IADMM as subproblem solver) is efficient
enough to compete favourably with Gurobi,  which is a well-developed solver for convex QP problems.


\medskip
Next we consider another class of non-semi-strongly convex problems, rank lasso problems,
 to evaluate
the efficiency of IADMM within the PPPM framework.

\begin{exmp}
{\bf Rank LASSO problem}
\begin{equation}\label{RLASSO}
\min\left\{ \frac{2}{n(n-1)}\sum_{1\leq i<j\leq n}\left| (b_i-a_i^\top x)-(b_j-a_j^\top x) \right|+\lambda \|x\|_1:\ x\in \R^m\right\}.
\end{equation}
\end{exmp}

Problem (\ref{RLASSO}) was proposed by Wang et al. in \cite{wang2020tuning}. Its computational aspect is recently studied in \cite{tang2021proximal,bai2022highly}. Let $A:=\(a_1,a_2,\ldots,a_n\)^\top$ and $b=\(b_1,b_2,\ldots,b_m\)^\top.$ Problem (\ref{RLASSO}) is also called tuning-free robust regression because its regularization parameter has the following formula (see (7) of \cite{wang2020tuning}).

\begin{equation}\label{eq_lam}
\lambda=cG^{-1}_{\|S_n\|_\infty}(1-\alpha_0),
\end{equation}
where $\alpha_0=0.1,$ $c=1.1$ and $G^{-1}_{\|S_n\|_\infty}(1-\alpha_0)$ denotes the $(1-\alpha_0)-$quantile of the distribution of $\|S_n\|_\infty.$ Here $S_n=-2A^\top \xi/n(n-1)$ and $\xi=2r-(n+1)$ with $r$ following the uniform distribution on the permutations of the integers $\{1,2,\ldots,n\}.$ Problem (\ref{RLASSO}) can be written as the following problem:
\begin{equation}\label{RLASSO1}
\min\left\{ h(y)+\lambda \|z\|_1:\ Az-b=y,\ z\in \R^m,\ y\in \R^n\right\},
\end{equation}
where $h(y):=\frac{2}{n(n-1)}\sum_{1\leq i<j\leq n}|y_i-y_j|.$ Because problem (\ref{RLASSO1}) is not semi-strongly convex, we use the partial proximal point method to solve it. When we use IADMM to solve the proximal subproblem of (\ref{RLASSO1}), updating $z$ corresponds to solving the following problem:
\begin{equation}\label{Update_z}
\min\left\{ \lambda\|z\|_1+\beta\| Az-b-y-\lambda/
\beta \|^2/2+\sigma \|z-z^k\|^2/2:\ z\in \R^m \right\},
\end{equation}
which is similar to the proximal point subproblem of the square LASSO problem. We can
use the semi-smooth Newton method \cite{li2018highly} to solve its dual problem efficiently. Updating $y$ can be done easily by computing the proximal mapping of $h(y).$ We can use the direct solver developed in \cite{lin2019efficient} to solve it in nearly linear time. Because updating the variable $z$ doesn't have a closed form solution for a general matrix $A,$ we compare Algorithm~\ref{alg:PPPM} two variants of ADMM's. The first one is linearized ADMM,
which is presented in Algorithm \ref{alg:LADMM}.

\begin{algorithm}[h]
	\renewcommand{\algorithmicrequire}{\textbf{Input:}}
	\renewcommand{\algorithmicensure}{\textbf{Output:}}
	\caption{LADMM}
	\label{alg:LADMM}
	\begin{algorithmic}
		\STATE {\bf Initialization}: Given $ (x^1,\lambda^1)$ and constants $\gamma\in \big(1,\frac{1+\sqrt{5}}{2}\big).$		\FOR{$k=1,2,\dots$}		
		\STATE 1. $y^{k+1}=\arg\min_y\left\{ h(y)+\frac{\beta_k}{2}\| Az^k-y-b-\lambda^k/\beta_k \|^2 \right\}$
		\STATE 2. $z^{k+1}= \arg\min_z\left\{ \lambda\|z\|_1+\frac{\beta_k}{2}\| Az-y^{k+1}-b+\lambda^k\beta_k \|^2+\frac{\beta_k}{2}\|z-z^k\|^2_{\lambda_{\max}(A^\top A)I-A^\top A} \right\}$
		\STATE 3. $\lambda^{k+1}=\lambda^{k}-\gamma\beta_k(A^\top z^{k+1}-y^{k+1}-b)$
		\STATE 4. Update $\beta_{k+1}$ from (\ref{adapheu})
		\ENDFOR
	\end{algorithmic}  
\end{algorithm}

In Step 2 of Algorithm~\ref{alg:LADMM}, the proximal term with the weighted matrix $\beta_k (\lambda_{\max}(A^\top A)I-A^\top A)$ 
can simplify the subproblem into computing the proximal mapping of $\lambda\|\cdot\|,$ which has a closed form solution. The second variant is that we introduce another variable $u\in \R^m$ and write problem (\ref{RLASSO1}) equivalently as follows:
\begin{equation}\label{RLASSO2}
\min\left\{ h(y)+\lambda \|z\|_1:\ Au-b=y,\ u=z,\ z\in \R^m,\ y\in \R^n,\ u\in \R^m\right\}.
\end{equation}
We can use the  ADMM presented in Algorithm \ref{alg:ADMM} to solve (\ref{RLASSO2}).

\begin{algorithm}[h]
	\renewcommand{\algorithmicrequire}{\textbf{Input:}}
	\renewcommand{\algorithmicensure}{\textbf{Output:}}
	\caption{ADMM}
	\label{alg:ADMM}
	\begin{algorithmic}
		\STATE {\bf Initialization}: Given $ (y^1,z^1,u^1,\lambda^1,\mu^1)$ and constants $\gamma\in \big(1,\frac{1+\sqrt{5}}{2}\big).$		\FOR{$k=1,2,\dots$}		
		\STATE 1. $u^{k+1}=\arg\min_y\left\{ \frac{\beta_k}{2}\| Au-b-y^k-\lambda^k/\beta_k \|^2+\frac{\beta_k}{2}\| u-z^k-\mu^k/\beta_k \|^2 \right\}$
		\STATE 2. 
		\begin{eqnarray*}
		(y^{k+1},z^{k+1})= \arg\min_{(y,z)}
		\left\{ 
		\begin{array}{l} h(y)+\frac{\beta_k}{2}\| Au^{k+1}-b-y-\lambda^k/\beta_k \|^2
		\\
		+\lambda \|z\|_1+\frac{\beta_k}{2}\| u^{k+1}-z-\mu^k/\beta_k \|^2 
		\end{array} \right\}
		\end{eqnarray*}
		\STATE 3. $\lambda^{k+1}=\lambda^{k}-\gamma\beta_k(A^\top u^{k+1}-y^{k+1}-b),$ $\mu^{k+1}=\mu^k-\gamma\beta_k(u^{k+1}-z^{k+1})$
		\STATE 4. Update $\beta_{k+1}$ from (\ref{adapheu})
		\ENDFOR
	\end{algorithmic}  
\end{algorithm}

In Algorithm~\ref{alg:ADMM}, Step 1 is essentially solving a positive definite linear system with the coefficient matrix $I_m+A^\top A.$ Note that we can apply the Sherman-Morrison-Woodbury formula \cite{golub1996cf} when $n$ is smaller than $m.$ We also store the Cholesky decomposition of $I+AA^\top$ to use it to solve the linear system in Step 1. Step 2 is 
equivalent to computing the proximal mappings of $h(y)$ and $\lambda \|z\|_1$ independently. Apart from ADMM, we can also formulate problem (\ref{RLASSO}) as a linear programming problem as mentioned in section 4.2 of \cite{wang2020tuning}. However, the linear programming formulation has huge number of variables and constraints. Thus we only test the linear programming model for small problems. 

For ADMM solvers, we terminate the algorithms when the KKT residue is smaller than $10^{-5}.$ We choose the initial penalty parameter to be $\max\{100/\lambda_{\max}(B)^2,10^{-6}\}.$ We set the maximum running time to be 3600s. For the linear programming model, we use
the barrier method in Gurobi 9.5.2 with 2 threads. We also set the tolerance to be $10^{-5}$ and the maximum running time to be 3600s;

We first consider synthetic dataset. We generated the data $A$ and $b$ in the same way as mentioned in example 1 of \cite{wang2020tuning}. In detail, the rows of $A$ are generated from a $p-$dimensional multivariable normal distribution $N_p(0,\Sigma),$ where the covariance matrix satisfies $\Sigma_{i,j}=0.5$ for $i\neq j$ and $\Sigma_{i,j}=1$ for $i=j.$ The ground truth $\hat{x}:=\(\sqrt{3},\sqrt{3},\sqrt{3},0,\ldots,0\)^\top.$ We generate $b$ as $b=A\hat{x}+\epsilon,$ where $\epsilon_i$ satisfies six different distributions: (1) $N(0,0.25)$; (2) $N(0,1)$; (3) $N(0,2)$; (4) $0.95N(0,1)+0.05N(0,100)$ (denoted by MN); (5) $\sqrt{2}t(4),$ where $t(4)$ denotes the $t$ distribution with 4 degree of freedom; (6) Cauchy(0,1). We choose $\lambda$ from the formula (\ref{eq_lam}) by randomly generating 1000 permutations of the integers $\{1,2,\ldots,n\}$ and computing the approximate $(1-\alpha_0)-$quantile. 

We first test small problems where $n=200$ and $m=1000.$ In this case, we may compared the above mentioned four algorithms. 

\begin{center}
\begin{footnotesize}
\begin{longtable}{|c|c|ccccc|l|}
\caption{Comparison of PPPM(IADMM), LADMM, ADMM and Gurobi for randomly generated rank LASSO problem. $n=200,$ $m=1000.$ For Algorithm~\ref{alg:PPPM}, ``Iter" means number of ADMM iterations.}
\label{T_small}
\\
\hline
Problem & Algorithm & Resp & Resd & Iter& Fval & Time [s] \\ \hline
\endhead
n = 200 & PPPM & 4.18e-06 & 7.67e-06 & 80 & 1.9747008e+00& 1.60e-01 \\
m = 1000 & LADMM & 9.92e-06 & 9.79e-06 & 3357 & 1.9747001e+00& 2.07e+00 \\
$\kappa$ = 0.32 & ADMM & 8.35e-06 & 1.00e-05 & 4206 & 1.9747004e+00& 3.13e+00 \\
$N(0,0.25)$ & Gurobi & 9.99e-19 & 4.74e-06 & 28 & 1.9747009e+00& 6.13e+01 \\
\hline
n = 200 & PPPM & 5.84e-06 & 9.98e-06 & 138 & 2.8465270e+00& 1.69e-01 \\
m = 1000 & LADMM & 9.17e-06 & 9.99e-06 & 3560 & 2.8465223e+00& 1.96e+00 \\
$\kappa$ = 0.32 & ADMM & 9.95e-06 & 9.27e-06 & 178321 & 2.8465172e+00& 1.31e+02 \\
$N(0,1)$ & Gurobi & 6.30e-18 & 1.39e-06 & 22 & 2.8465242e+00& 4.99e+01 \\
\hline
n = 200 & PPPM & 7.45e-06 & 9.46e-06 & 154 & 4.0089871e+00& 1.87e-01 \\
m = 1000 & LADMM & 9.98e-06 & 1.00e-05 & 6154 & 4.0089518e+00& 3.60e+00 \\
$\kappa$ = 0.32 & ADMM & 9.72e-06 & 9.93e-06 & 22269 & 4.0089440e+00& 1.62e+01 \\
$N(0,2)$ & Gurobi & 1.32e-17 & 2.36e-06 & 17 & 4.0089541e+00& 4.00e+01 \\
\hline
n = 200 & PPPM & 7.19e-06 & 9.92e-06 & 195 & 6.8206437e+00& 1.51e+00 \\
m = 1000 & LADMM & 9.61e-06 & 9.95e-06 & 6743 & 6.8203775e+00& 3.51e+00 \\
$\kappa$ = 0.32 & ADMM & 9.81e-06 & 3.59e-06 & 18699 & 6.8203800e+00& 1.32e+01 \\
MN & Gurobi & 2.38e-17 & 5.35e-07 & 18 & 6.8203778e+00& 4.45e+01 \\
\hline
n = 200 & PPPM & 1.50e-06 & 9.90e-06 & 152 & 3.8513292e+00& 1.32e-01 \\
m = 1000 & LADMM & 9.88e-06 & 9.96e-06 & 3918 & 3.8513145e+00& 2.19e+00 \\
$\kappa$ = 0.32 & ADMM & 9.99e-06 & 7.73e-06 & 28135 & 3.8513183e+00& 2.01e+01 \\
$\sqrt{2}t_4$ & Gurobi & 6.43e-18 & 1.16e-05 & 26 & 3.8513196e+00& 5.75e+01 \\
\hline
n = 200 & PPPM & 4.27e-06 & 7.18e-06 & 73 & 9.5835201e+00& 8.57e-02 \\
m = 1000 & LADMM & 8.52e-06 & 9.99e-06 & 4190 & 9.5833821e+00& 2.41e+00 \\
$\kappa$ = 0.32 & ADMM & 9.98e-06 & 8.92e-06 & 11156 & 9.5834183e+00& 8.04e+00 \\
Cauchy & Gurobi & 6.95e-18 & 1.31e-07 & 38 & 9.5833823e+00& 8.37e+01 \\
\hline 
\end{longtable}
\end{footnotesize}
\end{center}

From Table~\ref{T_small}, we can see that all the algorithms can solve the small size problems to the required accuracy. Gurobi is slow compared with ADMM-type methods except for the second instance. This is because the linear programming formulation of problem (\ref{RLASSO}) contains too many constraints and variables. Among the ADMM-type algorithms, the partial proximal point method (with IADMM as its subproblems solver) is more than 10 times faster than the other two algorithms. The main reason is that with the proximal term, we can use the powerful semi-smooth Newton method to solve the subproblem \eqref{Update_z} to update the variable $z.$

Now, we move on to test some large randomly generated problems. We do not consider Gurobi because it suffers from memory issue for large problems. For convenience, we only consider the first type of noise, i.e., $N(0,0.25)$ when we generate the dataset. 

\begin{center}
\begin{footnotesize}
\begin{longtable}{|c|c|ccccc|l|}
\caption{Comparison of PPPM(IADMM) and LADMM and ADMM for randomly generated rank LASSO problem.}
\label{T_large}
\\
\hline
Problem & Algorithm & Resp & Resd & Iter& Fval & Time [s] \\ \hline
\endhead
n = 1000 & PPPM & 4.37e-06 & 1.53e-06 & 51 & 1.0338039e+00& 6.75e-01 \\
m = 2000 & LADMM & 6.05e-06 & 1.00e-05 & 2193 & 1.0338041e+00& 8.70e+00 \\
$\kappa$ = 0.15 & ADMM & 5.52e-06 & 9.99e-06 & 4350 & 1.0338050e+00& 2.70e+01 \\
\hline
n = 1000 & PPPM & 5.83e-06 & 5.73e-06 & 53 & 1.0627181e+00& 1.45e+00 \\
m = 4000 & LADMM & 5.02e-06 & 9.99e-06 & 2606 & 1.0627179e+00& 3.03e+01 \\
$\kappa$ = 0.15 & ADMM & 4.56e-06 & 1.00e-05 & 6802 & 1.0627190e+00& 9.94e+01 \\
\hline
n = 2000 & PPPM & 4.30e-06 & 6.66e-06 & 52 & 8.3477708e-01& 3.91e+00 \\
m = 4000 & LADMM & 9.98e-06 & 6.53e-06 & 3469 & 8.3477689e-01& 1.73e+02 \\
$\kappa$ = 0.11 & ADMM & 9.77e-06 & 9.46e-06 & 7317 & 8.3477629e-01& 4.23e+02 \\
\hline
n = 2000 & PPPM & 3.48e-06 & 4.34e-06 & 60 & 8.5790776e-01& 9.32e+00 \\
m = 8000 & LADMM & 9.94e-06 & 9.10e-06 & 3750 & 8.5790787e-01& 3.75e+02 \\
$\kappa$ = 0.11 & ADMM & 9.88e-06 & 9.63e-06 & 31187 & 8.5791123e-01& 3.40e+03 \\
\hline
n = 4000 & PPPM & 1.88e-06 & 6.87e-06 & 56 & 6.8255971e-01& 1.94e+01 \\
m = 8000 & LADMM & 9.95e-06 & 6.99e-06 & 4465 & 6.8255971e-01& 8.12e+02 \\
$\kappa$ = 0.08 & ADMM & 2.21e-05 & 7.79e-06 & 17604 & 6.8255852e-01& 3.60e+03 \\
\hline
n = 4000 & PPPM & 2.57e-06 & 6.28e-06 & 58 & 6.9040873e-01& 2.21e+01 \\
m = 10000 & LADMM & 9.93e-06 & 7.22e-06 & 4078 & 6.9040871e-01& 9.08e+02 \\
$\kappa$ = 0.08 & ADMM & 3.32e-05 & 1.26e-05 & 14239 & 6.9040667e-01& 3.60e+03 \\
\hline
\end{longtable}
\end{footnotesize}
\end{center}

From Table~\ref{T_large}, we can see that the Algorithm~\ref{alg:PPPM} is readily more than 10 times faster than the other two algorithms. For the randomly generated datasets, the LADMM and ADMM  can return a solution of moderate accuracy for all of the instances. This is because random problems are usually well-conditioned. Apart from synthetic data, we also test on some real data
instances, which are collected from LIBSVM datasets \cite{chang2011libsvm}. We expand the features of the original data using the polynomial basis functions mentioned in \cite{huang2010predicting}. Different from randomly generated problems, the coefficient matrices $A$ for real datasets are usually ill-conditioned (with condition numbers ranging from
the order of $10^3$ to the order of more than $10^{12}$) and problem (\ref{prox-prob}) is difficult to be solved by traditional first order method. We don't show the results of certain algorithm if it reaches the maximum running time but the solution is very inaccurate. 

\begin{center}
\begin{footnotesize}
\begin{longtable}{|c|c|ccccc|l|}
\caption{Comparison of PPPM(IADMM) and LADMM and ADMM for real datasets. $(n,m)$ denotes the sample size and expanded feature size.}
\label{T_real}
\\
\hline
Problem & Algorithm & Resp & Resd & Iter& Fval & Time [s] \\ \hline
\endhead
abalone7& PPPM & 1.20e-06 & 8.53e-06 & 59 & 2.6709065e+00& 7.87e+00 \\
(4177,6435) & LADMM & 9.97e-06 & 8.17e-06 & 3706 & 2.6596874e+00& 5.77e+02 \\
$\kappa$ = 0.03 & ADMM & 6.06e-06 & 9.95e-06 & 10843 & 2.6597018e+00& 1.98e+03 \\
\hline
bodyfat7 & PPPM & 1.88e-06 & 8.48e-06 & 64 & 4.3262740e-03& 6.31e+00 \\
(252,116280) & LADMM & - & - & - & -& - \\
$\kappa$ = 0.06 & ADMM & - & - & - & -& - \\
\hline
E2006.test & PPPM & 7.74e-06 & 5.76e-06 & 62 & 3.6089303e-01& 6.50e+00 \\
(3308,150358) & LADMM & 9.99e-06 & 1.89e-06 & 655 & 3.5726244e-01& 1.15e+02 \\
$\kappa$ = 0.02 & ADMM & 1.46e-06 & 9.87e-06 & 2537 & 3.5701485e-01& 4.76e+02 \\
\hline
housing7 & PPPM & 1.47e-06 & 6.96e-06 & 66 & 7.2903775e+00& 1.20e+01 \\
(506,77520) & LADMM & 1.76e-05 & 3.32e-05 & 16685 & 7.2707283e+00& 3.60e+03 \\
$\kappa$ = 0.10 & ADMM & 1.43e-04 & 3.06e-05 & 15777 & 7.2704069e+00& 3.60e+03 \\
\hline
mpg7 & PPPM & 1.46e-06 & 5.35e-06 & 69 & 5.0256912e+00& 2.16e-01 \\
(392,3432) & LADMM & 9.15e-06 & 6.95e-06 & 1472 & 5.0191357e+00& 3.07e+00 \\
$\kappa$ = 0.10 & ADMM & 9.88e-06 & 5.25e-06 & 7906 & 5.0191178e+00& 2.14e+01 \\
\hline
pyrim5 & PPPM & 7.32e-06 & 9.29e-06 & 93 & 1.3603816e-01& 5.79e+00 \\
(74,201376) & LADMM & 4.43e-05 & 1.82e-04 & 38099 & 1.3603915e-01& 3.60e+03 \\
$\kappa$ = 0.37 & ADMM & 6.88e-04 & 1.78e-03 & 36569 & 1.3910683e-01& 3.60e+03 \\
\hline
space\_ga9 & PPPM & 3.70e-06 & 8.81e-06 & 53 & 1.6569455e-01& 3.16e+00 \\
(3107,5005) & LADMM & 4.99e-06 & 9.93e-06 & 281 & 1.6569433e-01& 2.74e+01 \\
$\kappa$ = 0.02 & ADMM & -& - & - & -& - \\
\hline
triazines4 & PPPM & 7.41e-06 & 2.67e-06 & 95 & 1.6628905e-01& 3.08e+01 \\
(186,635376) & LADMM & 2.67e-05 & 9.53e-05 & 6381 & 1.6632803e-01& 3.60e+03 \\
$\kappa$ = 0.29 & ADMM & - & - & - & -& - \\
\hline
E2006.train & PPPM & 7.96e-06 & 9.66e-06 & 85 & 3.7853164e-01& 3.76e+01 \\
(16087,150360) & LADMM & - & - & - & -& - \\
$\kappa$ = 0.01 & ADMM & 8.19e-06 & 9.85e-06 & 353 & 3.7842288e-01& 3.52e+02 \\
\hline
log1p.E2006.test & PPPM & 9.98e-06 & 4.28e-06 & 69 & 4.3580711e-01& 6.68e+01 \\
(3308,4272226) & LADMM & 8.32e-04 & 4.17e-03 & 3027 & 4.1710211e-01& 3.60e+03 \\
$\kappa$ = 0.10 & ADMM & - & - & - & -& - \\
\hline
log1p.E2006.train & PPPM & 8.92e-06 & 1.72e-06 & 71 & 4.1277825e-01& 2.40e+02 \\
(16087,4272227) & LADMM & - & - & - & -& - \\
$\kappa$ = 0.05 & ADMM & - & - & - & -& - \\
\hline
\end{longtable}
\end{footnotesize}
\end{center}

From Table~\ref{T_real}, we can see that Algorithm~\ref{alg:PPPM} is the only solver that can solve all the problems to the required accuracy. Moreover, Algorithm~\ref{alg:PPPM} is much more efficient than the other two algorithms. For the instances {\tt housing7} and {\tt pyrim5}, Algorithm~\ref{alg:PPPM} is more than 100 times faster than the other two algorithms. This implies that our algorithm is less affected by ill-conditioning of the dataset. This verifies the efficiency and robustness of the partial proximal point method
(with IADMM as its subproblems solver) for solving non-semi-strongly convex
problems of the form \eqref{eq-prob}.

\section{Conclusion}\label{Sec-conc}
We have proposed an adaptive ADMM which can adjust the penalty parameters
adaptively with a large degree of freedom. Various types of convergence results for IADMM have been established under the semi-strongly convex condition. We have also proposed a partial proximal point method (together with IADMM as its subproblems solver)
to solve problems without semi-strongly convexity. Numerical experiments show that the convergence of IADMM with self-adaptive parameters 
adjustment is insensitive to the initial parameter chosen as compared to the fixed-parameter ADMM. Also, the partial proximal point method is much more efficient compared with other ADMM-type methods. There are further research questions that we can explore, and these include analyzing the convergence rate of partial proximal point method and applying this method to solve other problems with two non-smooth functions.

\section*{Acknowledgement}
We thank the reviewers and Associate Editor for many helpful suggestions
to improve the quality of the paper.

\bibliographystyle{abbrv}
\bibliography{IADMM}

\appendix

\section{Proof details}
\subsection{Proof of Lemma~\ref{onestep}}

\begin{proof}
From the optimality conditions in step 1 and 2, we have that 
\begin{align}
& -\Big(B^\top\lambda^k+\beta_kB^\top(By^{k+1}+Cz^k-b)+P_k(y^{k+1}-y^k)\Big)
\in \partial f(y^{k+1})
\label{eq-opt-f}
\\
& -\Big(C^\top \lambda^k+\beta_kC^\top(By^{k+1}+Cz^{k+1}-b)+Q_k(z^{k+1}-z^k)\Big)
\in \partial g(z^{k+1}).
\label{eq-opt-g}
\end{align}
From \eqref{eq-opt-f} and the convexity of $f$, we have that
\begin{align}\label{six}
&f(y^{k+1})-f(y)\leq
\inprod{ B^\top \lambda^k+\beta_kB^\top (By^{k+1}+Cz^k-b)+P_k(y^{k+1}-y^k)}{y-y^{k+1}}\notag \\
&=\inprod{ \lambda^k+\beta_k(By^{k+1}+Cz^k-b)}{By-By^{k+1}}
+\eta_{P_k}(y,y^k,y^{k+1}).
\end{align}
Similarly, from \eqref{eq-opt-g} and \eqref{gstr}, we have that
\begin{align}\label{sev}
&g(z^{k+1})-g(z)
\notag \\
&\leq \inprod{\lambda^k+\beta_k(By^{k+1}+Cz^{k+1}-b)}{Cz-Cz^{k+1}}
+\eta_{Q_k}(z,z^k,z^{k+1})-\frac{\sigma_g}{2}\ll z^{k+1}-z\ll^{2}\notag \\
&=\< \lambda^k+\beta_k(By^{k+1}+Cz^k-b),Cz-Cz^{k+1}\>+\eta_{\beta_kC^\top C+Q_k}(z,z^k,z^{k+1})-\frac{\sigma_g}{2}\ll z^{k+1}-z\ll^{2}.
\end{align}
From (\ref{six}), (\ref{sev}) we have that
\begin{align}\label{eigh}
&\F(x^{k+1})-\F(x)\notag \\
&\leq \inprod{\lambda^k+\beta_k(By^{k+1}+Cz^k-b)}{b-\A x^{k+1}}
 +\eta_{\beta_kC^\top C+Q_k}(z,z^k,z^{k+1})+\eta_{P_k}(y,y^k,y^{k+1})
 \notag\\
 & \quad -\frac{\sigma_g}{2}\ll z-z^{k+1}\ll^{2}
 \notag \\
&=\< \lambda^k+\beta_k(\A x^{k+1}-b),b-\A x^{k+1}\>+\beta_k\< C(z^k-z^{k+1}),b-\A x^{k+1}\>\notag\\
&\quad +\eta_{\beta_k C^\top C+Q_k}(z,z^k,z^{k+1})
+\eta_{P_k}(y,y^k,y^{k+1})-\frac{\sigma_g}{2}\ll z-z^{k+1}\ll^2
\notag \\
&=\< \lambda^{k+1},b-\A x^{k+1}\>+(\gamma-1)\beta_k\ll \A x^{k+1}-b\ll^2+\beta_k\< C(z^k-z^{k+1}),b-\A x^{k+1}\>
\notag \\
& \quad +\eta_{\beta_k C^\top C+Q_k}(z,z^k,z^{k+1})
  +\eta_{P_k}(y,y^k,y^{k+1})-\frac{\sigma_g}{2}\ll z-z^{k+1}\ll^2,
\end{align}
where we have used step 3 to get the last equality. 
From (\ref{eigh}), we have 
\begin{align}\label{ei1}
&\L(x^{k+1},\lambda)-\L(x,\lambda)\notag \\
&\leq \< \lambda^{k+1}-\lambda,b-\A x^{k+1}\>+(\gamma-1)\beta_k\ll \A x^{k+1}-b\ll^2+\beta_k\< C(z^k-z^{k+1}),b-\A x^{k+1}\>\notag \\
&\quad 
+\eta_{\beta_k C^\top C+Q_k}(z,z^k,z^{k+1})+\eta_{P_k}(y,y^k,y^{k+1})-\frac{\sigma_g}{2}\ll z-z^{k+1}\ll^2\notag \\
&=\Big\langle \lambda^{k+1}-\lambda,\frac{\lambda^k-\lambda^{k+1}}{\beta_k \gamma}\Big\rangle+(\gamma-1)\beta_k\ll \A x^{k+1}-b\ll^2+\beta_k\< C(z^k-z^{k+1}),b-\A x^{k+1}\>
\notag\\
&\quad +\eta_{\beta_k C^\top C+Q_k}(z,z^k,z^{k+1})
+\eta_{P_k}(y,y^k,y^{k+1})-\frac{\sigma_g}{2}\ll z-z^{k+1}\ll^2.
\end{align}

Now, we need to estimate $\beta_k\< C(z^k-z^{k+1}),b-\A x^{k+1}\>$. 
From \eqref{eq-opt-g}, we know that 
\begin{align}
&-C^\top \lambda^k-\beta_k C^\top (\A x^{k+1}-b)-Q_k(z^{k+1}-z^k)\in \partial g(z^{k+1})\notag \\
&-C^\top \lambda^{k-1}-\beta_{k-1}C^\top (\A x^k-b)-Q_{k-1}(z^k-z^{k-1})\in \partial g(z^k)\notag
\end{align}
Combining the above two equations together with the strongly convexity of $g$, we get 
\begin{align} 
\Big\langle
\begin{array}{c}
C^\top (\lambda^{k-1}-\lambda^k)-\beta_k C^\top (\A x^{k+1}-b)-Q_k(z^{k+1}-z^k)
\\
+\beta_{k-1}C^\top (\A x^k-b)+Q_{k-1}(z^k-z^{k-1})
\end{array},z^{k+1}-z^k \Big\rangle
\geq \sigma_g \norm{z^k-z^{k+1}}^2,\notag
\end{align}
which, together with step 3, implies that 
\begin{align}
& \sigma_g \norm{z^k-z^{k+1}}^2 
 \notag \\
&  \leq \< \lambda^{k-1}+\beta_{k-1}(\A x^k-b)-\lambda^k-\beta_k(\A x^{k+1}-b),C(z^{k+1}-z^k)\>\notag \\
&\quad +\< -Q_k(z^{k+1}-z^k)+Q_{k-1}(z^k-z^{k-1}),z^{k+1}-z^k\> \notag \\
&=\< (1-\gamma)\beta_{k-1}(\A x^k-b)-\beta_k(\A x^{k+1}-b),C(z^{k+1}-z^k)\>\notag \\
&\quad +\< -Q_{k-1}(z^{k+1}-z^k)+Q_{k-1}(z^k-z^{k-1}),z^{k+1}-z^k\>-(\beta_k-\beta_{k-1})\ll z^{k+1}-z^k\ll^2_Q.\notag 
\end{align}
The above inequality implies that
\begin{align}
&\beta_k \< \A x^{k+1}-b,C(z^{k+1}-z^k)\>
\notag \\
& \leq
 (\gamma-1)\< \beta_{k-1}(b-\A x^k),C(z^{k+1}-z^k)\>
 -\ll z^{k+1}-z^k\ll^2_{Q_{k}}\notag \\
&\quad +\beta_{k-1}\< Q(z^k-z^{k-1}),z^{k+1}-z^k\>-\sigma_g\ll z^k-z^{k+1}\ll^2\notag \\
&\leq \frac{(\gamma-1)\beta_{k-1}^2}{2\gamma\beta_k}\ll \A x^k-b\ll^2
+\frac{(\gamma-1)\gamma\beta_k}{2}\ll C(z^{k+1}-z^k)\ll^2-\ll z^{k+1}-z^k\ll^2_{Q_k}\notag \\
&\quad +\frac{\beta_{k-1}^2}{2\beta_k}\ll z^k-z^{k-1}\ll^2_Q+\frac{\beta_k}{2}\ll z^{k+1}-z^k\ll^2_Q-\sigma_g\ll z^k-z^{k+1}\ll^2.\notag\\
&=\frac{(\gamma-1)\beta_{k-1}^2}{2\gamma\beta_k}\ll \A x^k-b\ll^2
+\frac{(1-\delta)\beta_k}{2}\ll C(z^{k+1}-z^k)\ll^2+\frac{\beta_{k-1}^2}{2\beta_k}\ll z^k-z^{k-1}\ll^2_Q-\frac{\beta_k}{2}\ll z^{k+1}-z^k\ll^2_Q\notag \\
&\quad -\sigma_g\ll z^k-z^{k+1}\ll^2,\notag
\end{align}
In the above, we use the fact that $\gamma(\gamma-1) = 1-\delta$.
Now, we plug the above inequality into (\ref{ei1}), we get 
\begin{align}
&\L(x^{k+1},\lambda)-\L(x,\lambda)\notag\\
&\leq \Big\langle  \lambda^{k+1}-\lambda,\frac{\lambda^k-\lambda^{k+1}}{\beta_k\gamma}\Big\rangle+(\gamma-1)\beta_k\ll \A x^{k+1}-b\ll^2
+\frac{(\gamma-1)\beta_{k-1}^2}{2\gamma\beta_k}\ll \A x^k-b\ll^2+\eta_{P_k}(y,y^k,y^{k+1})\notag \\
&\quad +\eta_{\beta_k C^\top C+Q_k}(z,z^k,z^{k+1})
+\frac{(1-\delta)\beta_k}{2}\ll C(z^{k+1}-z^k)\ll^2+\frac{\beta_{k-1}^2}{2\beta_k}\ll z^k-z^{k-1}\ll^2_Q-\frac{\beta_k}{2}\ll z^{k+1}-z^k\ll^2_Q\notag \\
&\quad -\sigma_g\norm{ z^k-z^{k+1}}^2
-\frac{\sigma_g}{2}\norm{ z-z^{k+1}}^2\notag
\end{align}
\begin{align}\label{hehehe}
&\leq \Big\langle \lambda^{k+1}-\lambda,\frac{\lambda^k-\lambda^{k+1}}{\beta_k\gamma}
\Big\rangle+(\gamma-1)\beta_k\ll \A x^{k+1}-b\ll^2
+\frac{(\gamma-1)\beta_{k-1}^2}{2\gamma\beta_k}\ll \A x^k-b\ll^2
+\eta_{P_k}(y,y^k,y^{k+1})\notag \\
&\quad +\xi_{\beta_k C^\top C+Q_k}(z,z^k,z^{k+1}) -
\frac{\delta\beta_k}{2}\norm{C(z^{k+1}-z^k)}^2
+\frac{\beta_{k-1}^2}{2\beta_k}\ll z^k-z^{k-1}\ll^2_Q-\beta_k\ll z^{k+1}-z^k\ll^2_Q
\notag\\
&\quad -\sigma_g\ll z^k-z^{k+1}\ll^2-\frac{\sigma_g}{2}\ll z-z^{k+1}\ll^2.
\end{align}
Note that from step 4, we can derive that
\begin{eqnarray*}
\beta_k\left(\xi_{\beta_k C^\top C+Q_k}(z,z^k,z^{k+1})-\frac{(1-\epsilon)\sigma_g}{2}\ll z-z^{k+1}\ll^2\right)\leq \frac{\beta_k^2}{2}\ll z-z^k\ll^2_{C^\top C+Q}-\frac{\beta_{k+1}^2}{2}\ll z-z^{k+1}\ll^2_{ C^\top C+Q}.
\end{eqnarray*}
Multiply (\ref{hehehe}) by $\beta_k$ and use the above inequality, we obtain that
\begin{align}
&\beta_k\left(\L(x^{k+1},\lambda)-\L(x,\lambda)\right)\notag \\
&\leq \frac{1}{\gamma}\< \lambda^{k+1}-\lambda, \lambda^k-\lambda^{k+1}\>+(\gamma-1)\beta_k^2\ll \A x^{k+1}-b\ll^2+\frac{(\gamma-1)\beta_{k-1}^2}{2\gamma}\ll \A x^k-b\ll^2
\notag\\
&\quad +\beta_k\eta_{P_k}(y,y^k,y^{k+1}) 
+\frac{\beta_k^2}{2}\ll z-z^k\ll^2_{C^\top C+Q}-\frac{\beta_{k+1}^2}{2}\ll z-z^{k+1}\ll^2_{C^\top C+Q} 
-\frac{\delta\beta_k^2}{2}\norm{C(z^{k+1}-z^k)}^2
\notag \\
&\quad +\frac{\beta_{k-1}^2}{2}\ll z^k-z^{k-1}\ll^2_Q
-\beta_k^2\ll z^{k+1}-z^k\ll^2_Q -\sigma_g\beta_k\ll z^k-z^{k+1}\ll^2-\frac{\epsilon \sigma_g\beta_k}{2}\ll z-z^{k+1}\ll^2 
\notag\\
&=\frac{1}{\gamma}\xi(\lambda,\lambda^k,\lambda^{k+1})
-\frac{(2-\gamma)\beta_k^2}{2}\norm{\A x^{k+1}-b}^2+\frac{(\gamma-1)\beta_{k-1}^2}{2\gamma}\ll A x^{k}-b\ll^2
\notag \\
&\quad +\beta_k\eta_{P_k}(y,y^k,y^{k+1}) +\frac{\beta_k^2}{2}\ll z-z^k\ll^2_{C^\top C+Q}-\frac{\beta_{k+1}^2}{2}\ll z-z^{k+1}\ll^2_{C^\top C+Q} 
-\frac{\delta\beta_k^2}{2}\norm{C(z^{k+1}-z^k)}^2
\notag \\
&\quad +\frac{\beta_{k-1}^2}{2}\ll z^k-z^{k-1}\ll^2_Q
-\beta_k^2\ll z^{k+1}-z^k\ll^2_Q
-\sigma_g\beta_k\ll z^k-z^{k+1}\ll^2-\frac{\epsilon \sigma_g\beta_k}{2}\ll z-z^{k+1}\ll^2\notag 
\end{align}
where we have used the fact that
$\inprod{\lam_{k+1}-\lam}{\lam_k-\lam_{k+1}} = \frac{1}{2}\norm{\lam-\lam_k}^2
-\frac{1}{2}\norm{\lam-\lam_{k+1}}^2 -\frac{1}{2}\norm{\lam_k-\lam_{k+1}}^2$ 
and 
$\lambda^{k+1}-\lambda^{k}=\gamma\beta_k (\A x^{k+1}-b).$
Note that since $\gamma\in (1,\frac{1+\sqrt{5}}{2}),$ 
$\delta = 1+\gam-\gam^2 > 0$. Using the identity, 
$\frac{\gamma-1}{2\gamma}= \frac{(2-\gamma)}{2}-\frac{\delta}{2\gamma}$, 
we deduce that 
\begin{align}
&\beta_k\left( \L(x^{k+1},\lambda)-\L(x,\lambda)\right) 
+\frac{\delta \beta_{k-1}^2}{2\gamma}\ll \A x^{k}-b\ll^2
\notag \\
&\leq \frac{1}{\gamma}\xi (\lambda,\lambda^k\lambda^{k+1})+
\frac{(2-\gamma)\beta_{k-1}^2}{2}\ll \A x^k-b\ll^2
-\frac{(2-\gamma)\beta_{k}^2}{2}\norm{ \A x^{k+1}-b}^2
\notag \\
&\quad +\beta_k\eta_{P_k}(y,y^k,y^{k+1}) +\frac{\beta_k^2}{2}\ll z-z^k\ll^2_{C^\top C+Q}-\frac{\beta_{k+1}^2}{2}\ll z-z^{k+1}\ll^2_{C^\top C+Q}
 -\frac{\delta\beta_k^2}{2}\norm{C(z^{k+1}-z^k)}^2
\notag \\
&\quad +\frac{\beta_{k-1}^2}{2}\ll z^k-z^{k-1}\ll^2_Q
-\beta_k^2\ll z^{k+1}-z^k\ll^2_Q-\sigma_g\beta_k\ll z^k-z^{k+1}\ll^2-\frac{\epsilon \sigma_g\beta_k}{2}\ll z-z^{k+1}\ll^2.\notag 
\end{align}
From here, one can readily get  the required inequality in Lemma~\ref{onestep}.
\end{proof}

\subsection{Proof of Lemma~\ref{useful}}

\begin{proof}
Because $(x^*,\lambda^*)$ is a KKT solution, we have that $0\in \partial_x \L(x^*,\lambda^*)$. Since $\L(x,\lambda^*)$ is a convex function of $x$, we then have that 
\begin{equation}\label{telv}
\L(x^*,\lambda^*)\leq \L(x,\lambda^*)\ {\rm for\ any}\ x, 
\end{equation}
from which we get
\begin{equation}\label{thirt}
-\inprod{\lambda^*}{\A x^k-b} \leq \F(x^k)-\F(x^*).
\end{equation} 
Consider all $\lam\in \R^m$ such that $\ll \lambda\ll \leq \ll \lambda^*\ll+1$ in $\L(x^k,\lambda)-\L(x^*,\lambda)\leq h(k)D(\lambda)$, we have
\begin{equation}\label{fourt}
\F(x^k)-\F(x^*)+(\ll\lambda^*\ll+1)\ll \A x^k-b\ll\leq h(k)\max_{\ll \lambda\ll\leq \ll \lambda^*\ll+1}D(\lambda).
\end{equation}
Using (\ref{thirt}) in (\ref{fourt}), we get 
\begin{equation}\label{fift}
\ll \A x^k-b\ll\leq h(k)\max_{\ll \lambda\ll\leq \ll \lambda^*\ll+1}D(\lambda)=O(h(k)).
\end{equation}
Now, using (\ref{fift}) in (\ref{thirt}) and (\ref{fourt}) respectively, we get
$| \F(x^k)-\F(x^*)|=O(h(k)).$
\end{proof}

\subsection{Proof of Lemma~\ref{energy1}}
\begin{proof}
Substitute $(x^*,\lambda^*)$ into (\ref{onestepineq}), we get the following long inequality

\begin{align}\label{longineq}
&\beta_k\( \L(x^{k+1},\lambda^*)-\L(x^*,\lambda^*) \)+\overbrace{\frac{\delta\beta^2_{k-1}}{2\gamma}\|\A x^k-b\|^2}^{1}+\overbrace{\frac{\delta\beta^2_k}{2}\|C(z^{k+1}-z^k)\|^2}^{0}\notag \\
&+\overbrace{\frac{\beta_k}{2}\| y^k-y^{k+1} \|^2_{P_k}}^{0}+\frac{\beta_k^2}{2}\|z^k-z^{k+1}\|^2_Q+\frac{1}{2\gamma}\|\lambda^*-\lambda^{k+1}\|^2+\frac{(2-\gamma)\beta^2_{k}}{2}\|\A x^{k+1}-b\|^2\notag \\
&+\frac{\beta^2_{k+1}}{2}\|z^*-z^{k+1}\|^2_{C^\top C+Q}+\frac{\beta_k^2}{2}\|z^{k+1}-z^k\|^2_Q+\overbrace{\frac{\beta_{k+1}}{2}\| y^*-y^{k+1} \|^2_{P_{k+1}}}^{0}\notag \\
&\leq \frac{1}{2\gamma} \|\lambda^*-\lambda^k\|^2+\frac{(2-\gamma)\beta_{k-1}^2}{2}\| \A x^k-b\|^2+\frac{\beta_k^2}{2}\|z^*-z^k\|^2_{C^\top C+Q}+\frac{\beta_{k-1}^2}{2}\|z^k-z^{k-1}\|^2_Q\notag \\
&+\overbrace{\frac{\beta_k}{2}\|y^*-y^k\|^2_{P_k}}^{0}-\overbrace{\sigma_g \beta_k \|z^k-z^{k+1}\|^2}^{2}-\overbrace{\frac{\epsilon\sigma_g\beta_k}{2}\|z^*-z^{k+1}\|^2}^{3}
\end{align}
Now, we apply several operations to the above inequality: 1, ignore terms under ``0" since $P=0$; 2, move the term under ``1" to the right hand side; 3, move the term under ``2" to the left hand side and apply $\|z^{k+1}-z^k\|^2_Q/\lambda_{\max}(Q)\leq \|z^k-z^{k+1}\|^2$; 4, move one half of ``4" to the left hand side and apply $\|z^{k+1}-z^*\|^2\leq \|z^{k+1}-z^*\|^2_{C^\top C+Q}/\lambda_{\max(C^\top C+Q)}.$ After all these operations, we will get the inequality (\ref{ieqlong}).
\end{proof}

\subsection{Proof of Lemma~\ref{wy}}
\begin{proof}
From step 2 and step 3 in IADMM, we have that
\begin{equation}
0=\nabla g(z^{k+1})+C^\top \lambda^{k+1}+(1-\gamma)\beta_k C^\top (\A x^{k+1}-b)+Q_k(z^{k+1}-z^k). \notag 
\end{equation}
Since $(y^*,z^*,\lambda^*)$ is a KKT solution, we have $0=\nabla g(z^*)+C^\top \lambda^*.$ Combining these two equations together with the Lipschitz continuity of $\nabla g$, we have
\begin{align}\label{thir5}
&\ll C^\top (\lambda^{k+1}-\lambda^*)+(1-\gamma)\beta_k C^\top (\A x^{k+1}-b)+Q_k(z^{k+1}-z^k)\ll^2=\ll \nabla g(z^{k+1})-\nabla g(z^*)\ll^2\notag \\
&\leq L_g^2\ll z^{k+1}-z^*\ll^2.
\end{align}
For $0<\alpha<\frac{1}{2}$, by using the inequality $\norm{u+v+w}^2 \geq (1-2\alpha)\norm{u}^2-\frac{1}{\alpha}\norm{v}^2-\frac{1}{\alpha}\norm{w}^2$, we have that
\begin{align}
&\ll C^\top (\lambda^{k+1}-\lambda^*)+(1-\gamma)\beta_k C^\top (\A x^{k+1}-b)+Q_k(z^{k+1}-z^k)\ll^2\notag \\
&\geq 
(1-2\alpha)\ll C^\top (\lambda^{k+1}-\lambda^*)\ll^2
-\frac{1}{\alpha}\ll (1-\gamma) \beta_k C^\top(\A x^{k+1}-b)\ll^2
-\frac{1}{\alpha}\ll Q_k(z^{k+1}-z^k)\ll^2\notag \\
&\geq (1-2\alpha)\lambda_{\min}(CC^\top)\ll\lambda^{k+1}-\lambda^*\ll^2
-\frac{1}{\alpha}\lambda_{\max}(CC^\top)(1-\gamma)^2\beta_k^2\ll \A x^{k+1}-b\ll^2 -\frac{1}{\alpha}\ll Q_k(z^{k+1}-z^k)\ll^2. \notag
\end{align}
Plug this into (\ref{thir5}), we get
\begin{align}\label{thir6}
&(1-2\alpha)\lambda_{\min}(CC^\top) \ll \lambda^{k+1}-\lambda^*\ll^2\notag \\
&\leq \frac{1}{\alpha}\lambda_{\max}(CC^\top)(1-\gamma)^2\beta_k^2\ll \A x^{k+1}-b\ll^2
+ \frac{1}{\alpha} \ll Q_k(z^{k+1}-z^k)\ll^2+L_g^2\ll z^{k+1}-z^*\ll^2\notag \\
&\leq \frac{1}{\alpha}\lambda_{\max}(CC^\top)(1-\gamma)^2\beta_k^2\ll \A x^{k+1}-b\ll^2+\frac{1}{\alpha}\lambda_{\max}(Q)\beta_k^2\ll z^{k+1}-z^k\ll^2_Q+L_g^2\ll z^{k+1}-z^*\ll^2.\notag
\end{align}
This completes the proof.
\end{proof}

\end{document}